\documentclass[12pt, reqno]{amsart}
\usepackage{amsmath, amsfonts, amssymb, url}
\usepackage{eucal}
\usepackage{mathrsfs}
\usepackage{latexsym}
\usepackage{cite}
\usepackage{cases}
\allowdisplaybreaks[4]
\newtheorem{thm}{Theorem}[section]

\newtheorem{lem}[thm]{Lemma}

\theoremstyle{remark}
\newtheorem{rem}[thm]{Remark}

\theoremstyle{definition}
\newtheorem{definition}[thm]{Definition}

\numberwithin{equation}{section}

\begin{document}
	\title[Dual $L_p$ John ellipsoids for general measures]
	{Dual $L_p$ John ellipsoids for general measures}
	
\author[A.-J. Li]{Ai-Jun Li}
\address[A.-J. Li]{School of Science,
	Zhejiang University of Science and Technology, Hangzhou 310023, China}
\email{liaijun72@163.com}	
	\author[B. Liu]{Baihe Liu}
\address[ B. Liu]{School of Science,
	Zhejiang University of Science and Technology, Hangzhou 310023, China} 
\email{13140528462@163.com} 
\author[Q. Huang]{Qingzhong Huang}
\address[Q. Huang]{College of data science, Jiaxing University, Jiaxing 314001,	China} \email{hqz376560571@163.com}

	\begin{abstract} 
The dual $L_p$ John ellipsoid, including the classical L\"{o}wner  and
 Legendre ellipsoids, arises as the solution to a certain optimization problem.	In this paper, we consider a broad extension within the framework of   the  dual  weighted $L_p$ Brunn-Minkowski theory. A variational formula for general measures  with locally integrable densities, under $L_p$-harmonic radial combinations, is established. This leads us to propose a corresponding optimization problem for general measures.  We prove the existence, uniqueness and  characterization of the solution to this problem, which   defines a new type of ellipsoid. This ellipsoid extends the dual $L_p$	John ellipsoid  to a significantly general more setting, including  Gaussian measures.  The related geometric inequalities and the $L_p$ L\"{o}wner inclusion for general measures are also given.
	\end{abstract}

	\subjclass[2010]{52A20, 52A40.}
	
	\keywords{dual $L_p$ John ellipsoids, general measures, $L_p$ L\"{o}wner inclusion.}
	
	\thanks{The first author was supported by NSFC (Grant No. 12571146 and 12231006). The third author was supported by  NSFC (Grant No. 11701219) and the Qin Shen Scholar Program of Jiaxing University.}

	\maketitle
	\section{Introduction}

Throughout this paper, a convex body $K$ in the $n$-dimensional Euclidean
space $\mathbb{R}^n$ is a compact convex set with nonempty interior. Its support function $h_K:\mathbb R^n\rightarrow \mathbb{R}$  is defined by
\begin{equation}\label{e-6}
	h_K(x)=\max \{x\cdot y: y\in K\},
\end{equation} for each $x\in \mathbb{R}^n$, where $x\cdot y$ denotes the standard inner product of $x$ and $y$. Let
$\mathcal {K}_{o}^n$ denote the set of convex bodies in $\mathbb R^n$ that contain the origin $o$ in their interiors.
For $p\geq 1$, $\varepsilon> 0$, and $K,L\in\mathcal {K}_{o}^n$, the Minkowski-Firey  $L_p$-combination $K +_p \varepsilon \cdot L\in\mathcal {K}_{o}^n$
is defined in \cite{Firey} by  
\begin{equation}\label{lp}
	h^p_{K +_p \varepsilon\cdot L} (x)=h^p_K (x)+\varepsilon h^p_L(x),
\end{equation}
for all $x \in \mathbb{R}^n$. 
 In the prominent work \cite{L93}, Lutwak derived the following variational formula for the Minkowski-Firey $L_p$-combination \eqref{lp}:
\begin{equation}\label{e-3}
	\lim_{\varepsilon\rightarrow
		0^+}\dfrac{V(K\widetilde{+}_{p}\varepsilon\cdot
		L)-V(K)}{\varepsilon}=\frac{n}{p} \int_{S^{n-1}}
	\Big(\frac{h_L(u)}{h_K(u)}\Big)^pdV_K(u),
\end{equation}
where  $V$ denotes  the volume (Lebesgue measure) in $\mathbb{R}^n$, $V_K$ is the
cone-volume measure of $K$ on the  unit sphere $S^{n-1}$  defined by $dV_K=\frac{1}{n}h_KdS_K$  and $S_K$ is the classical surface area measure of $K$ on $S^{n-1}$. This variational formula led to an embryonic $L_p$ Brunn-Minkowski theory, which has expanded rapidly thereafter; for further details, as well as detailed bibliography on the topic we refer the reader to \cite[Chapter 9]{Schneider} and the references therein.

Based on the variational formula \eqref{e-3}, Lutwak, Yang, and Zhang
\cite{LYZJ} proposed the following optimization problem.

\noindent\textbf{Problem~$S_p$.} {\em Given a convex
	body~$K\in\mathcal {K}_{o}^n$
	and~$0<p\leq\infty$, find an origin-symmetric ellipsoid~$E$ which solves the following constrained
	extremal problem:
	\begin{equation*}\max_{E\in \mathcal{E}^n_e}V(E)\quad\textrm{subject to}\quad
		\frac{1}{V(K)}\int_{S^{n-1}}\Big(\frac{h_E(u)}{h_K(u)}\Big)^pdV_K(u)\le 1
		,\end{equation*} where $\mathcal{E}^n_e$ denotes the
	set of all origin-symmetric ellipsoids in $\mathbb R^n$. } \\

\noindent  Lutwak et al. \cite{LYZJ} proved that there exists a
unique origin-symmetric ellipsoid which solves Problem $S_p$. This
ellipsoid, denoted by $E_p K$, is usually called the $L_p$ John ellipsoid of $K$ and characterized in \cite{LYZJ} as follows:  $E_p K$ solves Problem $S_p$ for $p>0$ if and only
if for all $x \in \mathbb{R}^n$,
\begin{equation*}
	V(K)h_{E_p^* K}(x)^{2}=n\int_{S^{n-1}}|x \cdot u|^2 h_{E_p K}(u)^{-2}\Big(\frac{h_{E_p K}(u)}{h_K(u)}\Big)^pdV_K(u),
\end{equation*}
where $E^\ast_p K$ denotes the polar body of $E_p K$.

In 1970s, the dual Brunn-Minkowski theory, pioneered by Lutwak \cite{L75}, 
presents a highly nontrivial duality in convex geometry by replacing convex bodies with star bodies  and the Minkowski-Firey $L_p$-combination with the $L_p$-harmonic radial combination.  Recall that 	a subset $K \subseteq \mathbb{R}^n$ is said to be star-shaped with respect to the origin $o$ if the intersection
of every straight line through the origin with $K$ is a line segment.  Its radial function
$\rho_K:\mathbb R^n \backslash \left\{o\right\}\rightarrow
[0,\infty)$ is defined by
\begin{equation}\label{JXHS}
	\rho_K(x)=\max\left\{ \lambda \geq 0 : \lambda x\in K\right\},
\end{equation}
for each $x\in\mathbb R^n \backslash \left\{o\right\}$. A star-shaped set $K$ is called a star body if the radial function restricted on $S^{n-1}$ is positive and continuous.  Let
$\mathcal {S}_{o}^n$ denote the set of star bodies in $\mathbb R^n$. For $p>0$, $\varepsilon>0$, and $K,L\in\mathcal {S}_{o}^n$, the $L_p$-harmonic radial combination $K\widetilde{+}_{-p}\varepsilon\cdot L\in\mathcal {S}_{o}^n$ is  defined by
\begin{equation}\label{prh}
	\rho_{K\widetilde{+}_{-p}\varepsilon\cdot L}^{-p}(x)=\rho_K^{-p} (x)+\varepsilon\rho_L^{-p} (x),
\end{equation}	
for all $x \in \mathbb{R}^n\backslash \{o\}$.  Analogous to \eqref{e-3}, Lutwak \cite{L96} also established a variational formula for the $L_p$-harmonic radial combination \eqref{prh}:
\begin{align}\label{Vp}
	\lim_{\varepsilon\rightarrow
		0^+}\frac{V(K\widetilde{+}_{-p}\varepsilon\cdot
		L)-V(K)}{\varepsilon}=-\frac{n}{p} \int_{S^{n-1}}\Big(\dfrac{\rho_L(u)}{\rho_K(u)}\Big)^{-p} d\widetilde{V}_{K}(u),
\end{align}
where we use the  dual conical measure $\widetilde{V}_{K}$ of $K$ on $S^{n-1}$  defined in \cite{LYZ10} by $d\widetilde{V}_{K}=\frac{1}{n}\rho_K^nd\sigma_{n-1}$  and $\sigma_{n-1}$ is the spherical Lebesgue measure on $S^{n-1}$. Although  \(\widetilde{V}_{K}\) and \(V_K\) have distinct expressions, their theoretical roots all lead back to the cone measure
 \cite{Barthe05,GM}. In recent years, the dual Brunn-Minkowski theory has received considerable attention. Some important geometric measures were introduced and studied. For example, the dual curvature measures  and the corresponding dual Minkowski problems (see, e.g., \cite{HLYZ,LYZ18,Zhao,BLYZZ}) have been extensively investigated. 

Corresponding to Problem $S_p$, Yu,  Leng, and  Wu \cite{YLW} proposed the following optimization problem (see also  \cite{BR}).

\noindent\textbf{Problem $\widetilde S_p$.} {\em Given a star
	body $K\in\mathcal {S}_{o}^n$  and $0<p\leq\infty$, find an origin-symmetric ellipsoid $E$ which solves the following constrained
	extremal problem:
	\begin{equation*}
		\min_{E\in \mathcal{E}^n_e}V(E)\quad\textrm{subject to}\quad
		\frac{1}{V(K)}\int_{S^{n-1}}\Big(\dfrac{\rho_E(u)}{\rho_K(u)}\Big)^{-p} d\widetilde{V}_{K}(u)\le 1.
	\end{equation*} 
} \\
\noindent Yu  et al. \cite{YLW} proved that there exists a unique origin-symmetric ellipsoid which solves Problem $\widetilde S_p$. This
ellipsoid, denoted by $\widetilde E_p K$, is called the dual $L_p$ John ellipsoid of $K$ and   characterized in \cite{YLW} as follows:  $\widetilde E_p K$ solves Problem $\widetilde S_p$ for $p>0$ if and only
if for all $x \in \mathbb{R}^n\backslash \{o\}$,
\begin{equation}\label{s4}
	V(K)\rho_{\widetilde E_p^* K}(x)^{-2}=n\int_{S^{n-1}}|x \cdot u|^2\rho_{\widetilde E_p K}(u)^{2}\Big(\dfrac{\rho_{\widetilde E_p K}(u)}{\rho_K(u)}\Big)^{-p} d\widetilde{V}_{K}(u),
\end{equation}
where $\widetilde E_p^* K$ denotes the polar body of $\widetilde E_p K$.

Note that  $L_p$ John ellipsoids unify several  fundamental ellipsoids, such as the
classical John ellipsoid \cite{J} ($p=\infty$), the Petty ellipsoid \cite{PettyS} ($p=1$)
and the LYZ ellipsoid \cite{LYZel} ($p=2$, in some
sense dual to the Legendre ellipsoid of inertia in classical
mechanics \cite{MP}). Similarly,  dual $L_p$ John ellipsoids unify the classical L\"{o}wner ellipsoid ($p=\infty$) and
the Legendre ellipsoid ($p=2$). In particular, Lutwak, Yang, and Zhang \cite{LYZCR} showed that the inclusion between the LYZ ellipsoid and the Legendre ellipsoid is the geometric analogue of the Cramer-Rao inequality, a fundamental inequality in the information theory. Among other things, the John ellipsoid  is the unique ellipsoid of maximal volume contained in convex body $K$, whereas the L\"{o}wner ellipsoid is the unique ellipsoid of minimal volume containing $K$. Both ellipsoids are  investigated widely in the field of convex geometry and local theory of
Banach spaces, see, e.g., \cite{Ball-91, Barthe,Gia,Gruber,J,Lew,GiaM,F-Barthe,Alonso-S-2}.

Till now, $L_p$ John ellipsoids and their dual ellipsoids have been extended to, for instance,   logarithmic
John ellipsoids \cite{HX},  mixed $L_p$ John ellipsoids \cite{HXZ}, Orlicz-John ellipsoids \cite{ZX}, 
Orlicz-Legendre ellipsoids \cite{ZX2},  $(p, q)$-John ellipsoids \cite{LWZ,MWF},  sine ellipsoids \cite{LHX}, and other extensions (negative indices) \cite{Lv,LuX}.

Over the last two decades, various problems from the $L_p$ Brunn-Minkowski theory had been extended to that of
general measures. The collection of such extensions is nowadays called as the weighted $L_p$ Brunn-Minkowski theory (see, e.g., \cite{Hos1,Li,LRZ,LM,Mil,RX,Wu,Wu2,Zv,KL,FLMZI,FLMZII,Wu3,GWXY}). Very recently, $L_p$  John ellipsoids for general measures are proposed and solved by
Ai  and Huang \cite{AWHQ}. In light of the above, a natural question in the framework of   the  dual  weighted $L_p$ Brunn-Minkowski theory arises: what are dual $L_p$ John ellipsoids for general
measures?

To fulfill this goal, we first give the following definition.  
\begin{definition}\label{mud}
Let $\mu$ be a Borel measure on $\mathbb{R}^n$ that has a locally integrable Radon-Nikodym derivative $g$ from $\mathbb{R}^n$ to $[0, \infty)$; i.e.,
$$
\frac{d\mu(x)}{dx} = g(x),\text{ with } g: \mathbb{R}^n \to [0, \infty), g \in L^1_{\text{loc}}(\mathbb{R}^n).
$$
Let $K\in\mathcal {S}_{o}^n$, such that the boundary of $K$, $\partial K$, up to a set of $(n-1)$-dimensional Hausdorff measure zero, is in the Lebesgue set of $g$. Then we define the dual conical $\mu$-measure $\widetilde{V}_{\mu,K}$ of $K$ as a Borel measure on $S^{n-1}$  by  \begin{equation}\label{e-7}
		\widetilde{V}_{\mu,K}(\eta)=\frac{1}{n}\int_{\eta}\rho_K(u)^ng\big(r_K(u)\big)d\sigma_{n-1}(u),
	\end{equation}
for any Borel set $\eta\subset S^{n-1}$, where    the radial map $r_K:S^{n-1}\rightarrow\partial K$ is  a continuous map  defined by $r_K(u)=\rho_K(u)u$ for $u\in S^{n-1}$. 
\end{definition}

The following variational formula for general measure $\mu$ regarding the $L_p$-harmonic radial combination \eqref{prh} is established.
\begin{thm}\label{m1}
Let $K,L\in\mathcal {S}_{o}^n$,  $0<p<\infty$ and let $\mu$ be a Borel measure on $\mathbb{R}^n$ with locally integrable density $g$. If $\partial K$, up to a set of $(n-1)$-dimensional Hausdorff measure zero, is in the Lebesgue set of $g$, then
	\begin{equation}\label{lmv1}
		\lim_{\varepsilon\rightarrow
			0^+}\dfrac{\mu(K\widetilde{+}_{-p}\varepsilon\cdot
			L)-\mu(K)}{\varepsilon}=-\frac{n}{p} \int_{S^{n-1}}\Big(\dfrac{\rho_L(u)}{\rho_K(u)}\Big)^{-p} d\widetilde{V}_{\mu,K}(u).
	\end{equation}
\end{thm}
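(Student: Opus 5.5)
Write $K_\varepsilon=K\widetilde{+}_{-p}\varepsilon\cdot L$ and pass to polar coordinates. For any star body $M$,
\[
\mu(M)=\int_{S^{n-1}}\int_0^{\rho_M(u)} g(tu)\,t^{n-1}\,dt\,d\sigma_{n-1}(u).
\]
By \eqref{prh}, $\rho_{K_\varepsilon}(u)=\big(\rho_K(u)^{-p}+\varepsilon\rho_L(u)^{-p}\big)^{-1/p}\le\rho_K(u)$. Hence
\[
\frac{\mu(K_\varepsilon)-\mu(K)}{\varepsilon}=-\int_{S^{n-1}}\frac{1}{\varepsilon}\int_{\rho_{K_\varepsilon}(u)}^{\rho_K(u)} g(tu)\,t^{n-1}\,dt\,d\sigma_{n-1}(u).
\]
The plan is to find the pointwise limit of the inner quantity for $\sigma_{n-1}$-almost every $u$, and then justify passing the limit inside the spherical integral.

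For the radial increment, Taylor expansion gives
\[
\rho_K(u)-\rho_{K_\varepsilon}(u)=\frac{\varepsilon}{p}\,\rho_K(u)^{1+p}\rho_L(u)^{-p}+o(\varepsilon),
\]
uniformly in $u$, since $\rho_K$ and $\rho_L$ are continuous and positive on the compact sphere. The inner integral is therefore an average of $g(tu)t^{n-1}$ over a shrinking radial segment ending at $r_K(u)$, multiplied by the segment's length. If $r_K(u)$ is a Lebesgue point of $g$ in the appropriate sense, this average should tend to $\rho_K(u)^{n-1}g(r_K(u))$. The pointwise limit is then
\[
\frac1p\rho_K^{n+p}\rho_L^{-p}\,g(r_K(u))=\frac{n}{p}\Big(\frac{\rho_L}{\rho_K}\Big)^{-p}\cdot\frac1n\rho_K^n g(r_K(u)).
\]
This is exactly the integrand of \eqref{lmv1} with respect to $d\widetilde V_{\mu,K}$, by \eqref{e-7}.

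The main obstacle is that the Lebesgue-set hypothesis is an $n$-dimensional statement about balls, whereas we need one-dimensional radial averages along a.e. ray. A single ray has measure zero, so pointwise radial convergence does not follow directly. I would therefore not argue pointwise. Instead I would treat the whole expression as an integral over the thin shell $K\setminus K_\varepsilon$, and compare it with $\int_{S^{n-1}}\frac1\varepsilon(\rho_K-\rho_{K_\varepsilon})\rho_K^{n-1}g(r_K(u))\,d\sigma_{n-1}$. The difference is bounded by
\[
\int_{S^{n-1}}\frac1\varepsilon\int_{\rho_{K_\varepsilon}}^{\rho_K}|g(tu)t^{n-1}-g(r_K(u))\rho_K^{n-1}|\,dt\,d\sigma_{n-1}.
\]
To control this, fix $u_0$ with $r_K(u_0)$ a Lebesgue point, and consider a small spherical cap $C$ around $u_0$. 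The shell over $C$ is contained in a ball $B(r_K(u_0),\delta)$ with $\delta\approx\operatorname{diam}C+\varepsilon$, and the shell has volume comparable to $\varepsilon\sigma(C)$. By the Lebesgue point property, the average of $|g-g(r_K(u_0))|$ over such balls is small, and this transfers to the shell. Continuity of $\rho_K$ lets us replace $\rho_K(u)^{n-1}$ by $\rho_K(u_0)^{n-1}$ up to small error.

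This argument uses $|B|\lesssim\varepsilon\sigma(C)$, which forces the caps to shrink like $\varepsilon$ and needs a covering (Vitali-type) argument on the sphere to sum the local errors. Near non-Lebesgue points, which form a $\mathcal H^{n-1}$-null subset of $\partial K$ and hence a $\sigma_{n-1}$-null set of directions via the Lipschitz-type control from $r_K$, we use local integrability of $g$ together with absolute continuity to make the contribution small. The final step reads off \eqref{lmv1} through \eqref{e-7}.

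An alternative I would try first, to avoid the covering argument, is to establish the result for continuous $g$, where it is immediate by uniform convergence. One would then approximate $g$ by continuous $g_k$ in $L^1$ near $\partial K$, relying on the Lebesgue-set hypothesis to control the boundary trace $g\circ r_K$. However, the shell integral $\varepsilon^{-1}\int_{K\setminus K_\varepsilon}|g-g_k|$ is not controlled uniformly in $\varepsilon$ by the $L^1$ norm alone. That uniform control is the key difficulty, and it is precisely what the covering argument supplies.
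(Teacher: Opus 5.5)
The step that fails is your treatment of the exceptional directions, and behind it the summation of the local errors.

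Absolute continuity of $A\mapsto\int_A g$ only makes $\int_A g$ small when $|A|$ is small. What you need is that $\varepsilon^{-1}\int_{A_\varepsilon} g$ is small, where $A_\varepsilon$ is the part of the shell $K\setminus K_\varepsilon$ lying over a small set of directions and $|A_\varepsilon|=O(\varepsilon)$. This is exactly the uniform-in-$\varepsilon$ control that, as you note in your last paragraph, no $L^1$ bound supplies.

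The same obstruction sits inside the covering argument at Lebesgue points. The averages of $|g-g(x_0)|$ over $B(x_0,r)$ tend to $0$ only pointwise in $x_0$. Adding up roughly $\varepsilon^{1-n}$ local errors of size $\varepsilon^{n-1}\cdot(\text{average})$ therefore needs a uniform or maximal-function bound on $\partial K$. Egorov's theorem just returns you to a small exceptional set on which the same shell integral must be controlled. (Separately, enclosing the shell over a cap $C$ in a ball of radius $\approx\operatorname{diam}C+\varepsilon$ presupposes a Lipschitz $\rho_K$; for a star body $\rho_K$ is only continuous.)

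This cannot be repaired, because the statement is false under the stated hypothesis. Let $n=2$, $K=L=B^2$, $0<p<\infty$, and $t_k=2^{-k}$. Let $R_k$ be the disk of radius $t_k/8$ centred at $(1-\frac34t_k)e_1$, and set $g=\sum_k\frac{64}{\pi t_k}\mathbf{1}_{R_k}$. Then $\int_{R_k}g=t_k$ and $g\in L^1(\mathbb{R}^2)$.

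The $R_k$ lie in the open disk and accumulate only at $e_1$, so $g\equiv 0$ near every $x_0\in S^1\setminus\{e_1\}$. Hence all of $\partial K$ but one point lies in the Lebesgue set, $g\circ r_K=0$ off $e_1$, and the right side of \eqref{lmv1} is $0$.

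On the other hand, $K_\varepsilon=(1+\varepsilon)^{-1/p}B^2$. For $\varepsilon_k=(1-t_k)^{-p}-1\sim pt_k$, the shell $K\setminus K_{\varepsilon_k}=\{1-t_k<|x|\le 1\}$ contains $R_k$, so
\[
\frac{\mu(K_{\varepsilon_k})-\mu(K)}{\varepsilon_k}\le-\frac{t_k}{\varepsilon_k}\longrightarrow-\frac1p .
\]

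In this example the ray-by-ray derivative computed in the paper's proof is $0$ for every $u\neq e_1$. What fails is the interchange of limit and integral, which the paper justifies by dominating the limit $G_0'$ rather than the difference quotients. So the paper's pointwise argument and your covering argument both need an extra hypothesis, e.g.\ $g$ continuous on a neighbourhood of $\partial K$ (enough for the Gaussian case).

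Under that hypothesis the proof closes. Since $\rho_K-\rho_{K_\varepsilon}\le\frac{\varepsilon}{p}\rho_K^{1+p}\rho_L^{-p}$, the radial quotients are bounded by $\frac1p\sup g\cdot\max_{S^{n-1}}\rho_K^{n+p}\rho_L^{-p}$, with the supremum taken over a compact neighbourhood of $\partial K$. They also converge pointwise, and bounded convergence---your ``continuous $g$'' case---finishes the proof.
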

\noindent 
In particular, if $\mu$ is the Lebesgue measure on $\mathbb{R}^n$ (i.e.,
$g\equiv 1$ on $\mathbb R^n$), then $\widetilde{V}_{\mu,K}$ reduces to $\widetilde{V}_{K}$. In this case,  \eqref{lmv1} reduces to \eqref{Vp} .

Based on the integral
representation \eqref{lmv1}, we propose  the following
optimization problem.

\noindent\textbf{Problem $\widetilde M_{\mu,p}$.} {\em
	 Given a star
	 body $K\in\mathcal {S}_{o}^n$ and $0<p\leq\infty$, find an origin-symmetric ellipsoid $E$ which solves the following constrained
	minimization problem:
\begin{equation*}
	\min_{E\in \mathcal{E}^n_e}V(E)\quad\textrm{subject to}\quad
	\frac{1}{|\widetilde{V}_{\mu,K}|}\int_{S^{n-1}}\Big(\dfrac{\rho_E(u)}{\rho_K(u)}\Big)^{-p} d\widetilde{V}_{\mu,K}(u)\le 1.
\end{equation*} 
Here $|\widetilde{V}_{\mu,K}|=\int_{S^{n-1}}d\widetilde{V}_{\mu,K}(u)$.}
 
The existence and uniqueness of the solution to Problem $\widetilde M_{\mu,p}$  established in this paper will define a new ellipsoid $\widetilde E_{\mu, p}K$. This ellipsoid  can be regarded as the dual $L_p$ John ellipsoid of $K$ for  a  general measure $\mu$, with the following characterization.
\begin{thm}\label{t-2}
	Let $K\in
	\mathcal {S}_{o}^n$  and $0<p\leq\infty$. If 
	$\sigma_{n-1}(\emph{supp}(g\circ r_K))>0$, then Problem ${\widetilde{M}_{\mu, p}}$ has a unique solution. Moreover, the ellipsoid $\widetilde E_{\mu, p}K$ solves Problem ${\widetilde{M}_{\mu, p}}$ for $0<p<\infty$ if and only if it satisfies 
	\begin{equation}\label{yh1}
		|\widetilde{V}_{\mu,K}|\rho_{\widetilde E_{\mu, p}^{*}K}(x)^{-2}=n\int_{S^{n-1}}|x \cdot u|^2\rho_{\widetilde E_{\mu, p}K}(u)^{2}\Big(\dfrac{\rho_{\widetilde E_{\mu, p}K}(u)}{\rho_K(u)}\Big)^{-p} d\widetilde{V}_{\mu,K}(u),
	\end{equation}
	for all  $x \in \mathbb{R}^n\backslash \{o\}$.
\end{thm}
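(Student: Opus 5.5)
We reduce everything to the linear-map parametrization of ellipsoids. Write $\widetilde V=\widetilde V_{\mu,K}$; this is a finite Borel measure, and $|\widetilde V|>0$ by the hypothesis $\sigma_{n-1}(\mathrm{supp}(g\circ r_K))>0$ together with $\rho_K\ge c>0$. Every $E\in\mathcal E^n_e$ is $E=AB^n$ with $A$ positive definite, and $\rho_E(u)^{-1}=|A^{-1}u|$. With $Q=(AA^t)^{-1}$ positive definite, $\rho_E(u)^{-2}=u\cdot Qu$ and $V(E)=\omega_n(\det Q)^{-1/2}$. The constraint functional, for $0<p<\infty$, becomes
$$\Phi(Q)=\frac1{|\widetilde V|}\int_{S^{n-1}}\rho_K(u)^{p}\,(u\cdot Qu)^{p/2}\,d\widetilde V(u).$$
Problem $\widetilde M_{\mu,p}$ is therefore equivalent to maximizing $\det Q$ over positive definite $Q$ subject to $\Phi(Q)\le1$. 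For $p=\infty$ the constraint is read as the limit, $\|\rho_K/\rho_E\|_{L^\infty(\widetilde V)}\le 1$, i.e.\ $E\supseteq K$ on $\mathrm{supp}\,\widetilde V$.

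\textbf{Existence.} I would first show the feasible set is bounded in $Q$. Since $\widetilde V$ is not concentrated on any great subsphere $S^{n-1}\cap\xi^\perp$ (its support has positive $\sigma_{n-1}$-measure), there is $c>0$ such that $\int |u\cdot v|^p\,d\widetilde V\ge c$ for all $v\in S^{n-1}$. This follows from continuity in $v$, compactness of $S^{n-1}$, and the fact that each integral is positive. Writing $Q=\sum\lambda_i v_iv_i^t$ gives $(u\cdot Qu)^{p/2}\ge \lambda_{\max}^{p/2}|u\cdot v_{\max}|^p$, so the constraint bounds $\lambda_{\max}$. Maximizing $\det Q$ then confines us to a compact set: $\det Q\ge\delta>0$ together with $\lambda_{\max}\le C$ bounds $\lambda_{\min}$ from below. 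Continuity of $\Phi$ and of $\det$ yields a maximizer, and the maximizer has $\Phi=1$ by scaling $Q\mapsto tQ$. For $p=\infty$ the same compactness argument works with the sup-constraint.

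\textbf{Uniqueness.} $Q\mapsto (u\cdot Qu)^{1/2}$ is convex, so for $p\ge1$ the functional $\Phi^{1/p}$ is convex, and $\log\det$ is strictly concave. If $Q_1\ne Q_2$ are both optimal, then $(Q_1+Q_2)/2$ is feasible and has strictly larger determinant, a contradiction. For $0<p<1$ convexity fails. Here I would first normalize one solution to $Q_1=I$ by a linear change of variables, which transforms $\widetilde V$ into another measure of the same kind. Then, as in Yu--Leng--Wu and Lutwak--Yang--Zhang, I would use the concavity of $t\mapsto t^{p/2}$ along the path $Q_t=Q_1^{1-t}\#Q_2^{t}$, or simply compare with the determinant-normalized $(\det Q)^{-1/n}Q$ and apply Jensen. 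I expect uniqueness for $0<p<1$ to be the main obstacle, and would borrow the known argument of \cite{YLW} nearly verbatim, since it only uses finiteness and nondegeneracy of the measure.

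\textbf{Characterization.} I would use Lagrange multipliers on the open cone of positive definite matrices. Differentiating $\log\det Q$ gives $Q^{-1}$, and differentiating $\Phi$ gives $\frac{p}{2|\widetilde V|}\int \rho_K^{p}(u\cdot Qu)^{p/2-1}uu^t\,d\widetilde V$. At the optimum these are proportional, $Q^{-1}=\lambda\cdot(\cdots)$. Taking the trace against $Q$ and using $\Phi(Q)=1$ fixes $\lambda=2|\widetilde V|/(p\cdot 1)$ up to the factor $n$. Pairing with $x$ and using $x\cdot Q^{-1}x=\rho_{E^*}(x)^{-2}$ and $\rho_K^p(u\cdot Qu)^{p/2-1}=\rho_E(u)^2(\rho_E/\rho_K)^{-p}$ gives \eqref{yh1}. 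Conversely, suppose $Q$ satisfies \eqref{yh1}. Taking the trace of \eqref{yh1} against $Q$ shows $\Phi(Q)=1$. By convexity (for $p\ge1$), or by the uniqueness argument above combined with this first-order condition, $Q$ is then the maximizer, since a stationary point of this problem must coincide with the unique solution.
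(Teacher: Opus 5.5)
\textbf{There is a genuine gap: uniqueness and the ``if'' direction of the characterization are not established.} Your existence argument and the necessity half of the characterization are fine. They agree with the paper in substance: the paper bounds $E_i^*$ via \eqref{lower} and uses Blaschke selection where you bound eigenvalues, and its variation $T_\varepsilon$ at $B^n$, after an affine normalization, is your Lagrange computation.

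\textbf{The convexity claim is false.} $Q\mapsto(u\cdot Qu)^{1/2}$ is concave, not convex, since it is the square root of a linear function of $Q$. So your $\Phi^{1/p}$ is not convex in $Q$. For $1\le p<2$, $\Phi$ is concave in $Q$, so $\Phi\big((Q_1+Q_2)/2\big)\ge 1$, and the midpoint generally violates the constraint. The $Q$-midpoint argument is valid only for $p\ge 2$ and $p=\infty$. For the remaining range $1\le p<2$ you must parametrize $E=B^{-1}B^n$ with $B=Q^{1/2}$. Then $\rho_E(u)^{-1}=|Bu|$ is convex in $B$, so $B\mapsto\|\rho_K|Bu|\|_{L^p(\widetilde V)}$ is convex, and $\det$ is strictly log-concave. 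This gives uniqueness, and the convex-program structure also makes your first-order condition sufficient for $p\ge 1$.

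\textbf{For $0<p<1$ there is no argument.} The inference ``a stationary point must coincide with the unique solution'' is not valid: uniqueness of the maximizer does not exclude other critical points. The converse needs a genuine sufficiency statement, and this is what the paper's Theorem~\ref{LL} provides. If $B^n$ satisfies \eqref{wy}, then $\int|Au|^p\,d\widetilde S_{\mu,p}(K,u)\ge\int d\widetilde S_{\mu,p}(K,u)$ for every positive definite $A$ with $\det A=1$, with equality only for $A=I$. Transport this by \eqref{aff0} and \eqref{aff6} as in Theorem~\ref{t-1}: move any solution, or any ellipsoid satisfying \eqref{yh1}, to $B^n$ for the data $(\mu^{\phi},\phi^{-1}K)$. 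This gives uniqueness and the converse simultaneously.

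\textbf{How to prove that lemma for $p<1$.} Do not use convexity; the convexity of $F$ claimed in the proof of Theorem~\ref{LL} also fails there, since $F(ta)=t^pF(a)$. Instead argue as follows.
\begin{itemize}
\item Write $A=\sum_i a_iv_iv_i^t$. Concavity of $\log$ gives $\log|Au|^2\ge\sum_i(u\cdot v_i)^2\log a_i^2$.
\item Integrating against the isotropic measure gives $\int\log|Au|\,d\widetilde S_{\mu,p}(K,u)\ge 0$.
\item Jensen's inequality for $\exp$ then yields $\int|Au|^p\,d\widetilde S_{\mu,p}(K,u)\ge\int d\widetilde S_{\mu,p}(K,u)$.
\item Equality forces $|Au|=1$ almost everywhere. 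Hence $A=I$, because the measure is nonzero and absolutely continuous with respect to $\sigma_{n-1}$, while $\{|Au|=1\}$ is $\sigma_{n-1}$-null for $A\ne I$.
\end{itemize}

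\textbf{Your geodesic idea can also be made to work for every $p>0$.} The operative fact is that $t\mapsto u\cdot(Q_1\#_tQ_2)u$, with $Q_1\#_tQ_2=Q_1^{1/2}(Q_1^{-1/2}Q_2Q_1^{-1/2})^tQ_1^{1/2}$, is log-convex. So $\Phi$ is convex along this path, while $\log\det$ is affine along it. The concavity of $t\mapsto t^{p/2}$ that you invoke is not what is needed; it points the wrong way.
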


If $\mu$ is the Lebesgue measure on $\mathbb{R}^n$, then Problem $\widetilde M_{\mu,p}$ becomes  Problem $\widetilde S_{p}$. In this case, $\widetilde E_{\mu, p}K$ is exactly the dual $L_p$ John ellipsoid $\widetilde E_{p}K$,  and \eqref{yh1} reduces to \eqref{s4}.

The rest of this paper is organized as follows: In Section \ref{2},
the background materials and the  dual $L_p$  mixed $\mu$-measure are provided. Section  \ref{4} is dedicated to fully solve Problem $\widetilde M_{\mu,p}$. The $L_p$ L\"{o}wner inclusion for our new ellipsoid $\widetilde E_{\mu,p}K$ is established  in Section \ref{5}.

	\section{Background materials and  dual $L_p$  mixed $\mu$-measures}\label{2}
In this section, we list some basic facts about convex bodies and star bodies.  The books \cite{Gardner,Schneider} are  good references.
	
	As usual,   let $|x|=\sqrt{x\cdot x}$ be the Euclidean norm of $x$. We denote the Euclidean unit ball by
$
	B^n= \left \{ x\in \mathbb
	R^n:| x | \leq 1 \right \} 
	$
    and the Euclidean unit sphere by
    $
    	S^{n-1}=\left \{ x\in \mathbb
    	R^n:| x | = 1 \right \}.
    $ The volume of $B^n$ is 
    solely written by $\omega_n$.
    Denote by $C(S^{n-1})$  the set of continuous functions on $S^{n-1}$.

If  $K\in \mathcal {K}^n_{o}$, then the polar body $K^*$ of $K$ is given by
\begin{equation*}
	K^{*}=\{x\in\mathbb R^n: x\cdot y\le 1\quad\textrm{for all}~y\in
	K\}.
\end{equation*}
It follows from  from \eqref{JXHS} and \eqref{e-6} that, for $K\in \mathcal {K}^n_{o}$,
 \begin{equation}\label{rh}
	h_{K^*}=1/\rho_{K} \quad \textrm{and} \quad
	\rho_{K^*}=1/h_{K},
\end{equation}
and for $K\in\mathcal {S}_{o}^n$, $c>0$, 
\begin{equation}\label{e-2}
	\rho_{K}(cx)=c^{-1}\rho_K(x), \ \ x\in\mathbb R^n \backslash \left\{o\right\}.
\end{equation}

For $\phi\in \mathrm{GL}(n)$, denote by $\phi K=\{\phi x: x\in K\}$  the image of a set $K\subset\mathbb{R}^n$  under $\phi$. If $K\in \mathcal{K}^n_{o}$, then it is easy to verify that 
\begin{equation}\label{pp}
		(\phi K)^*=\phi^{-t} K^*, 
	\end{equation}
where $\phi^{-t}$ is the inverse and transpose of $\phi$, and  
\begin{equation}\label{9}
	h_{\phi K}(x)=h_K(\phi^tx),\quad  x\in\mathbb R^n.
\end{equation}
Furthermore, for  $K\in \mathcal{S}^n_{o}$, 
	\begin{equation}\label{rsl}
		\rho_{\phi K}(x)=\rho_K(\phi^{-1}x),\quad   x\in\mathbb
		R^n\backslash\{o\}.
	\end{equation}
In particular, 
\begin{equation}\label{afff}
	\rho_{\phi B^n}(x)=\rho_{B^n}(\phi^{-1} x)=\frac{1}{|\phi^{-1} x|}.
\end{equation}

	\begin{proof}[Proof of Theorem \ref{m1}] 
		Using  polar coordinates, we have
		\begin{align}\label{mm}
			\mu(K)&=\int_K
			g(x)dx\notag\\
			&=\int_{S^{n-1}}\Big(\int_0^{\rho_K(u)}g(ru)r^{n-1}dr\Big)d\sigma_{n-1}(u).\notag
		\end{align}
Then,
\begin{equation}\label{ht1}
\mu(K\widetilde{+}_{-p}\varepsilon\cdot L)=\int_{S^{n-1}} G_{\varepsilon}(u)d\sigma_{n-1}(u),
\end{equation}
where $G_{\varepsilon}(u)=\int_0^{\rho_{\varepsilon}(u)}g(ru)r^{n-1}dr$ and $\rho_{\varepsilon}=\left(\rho_K^{-p}+\varepsilon
\rho_L^{-p}\right)^{-\frac{1}{p}}$.
Consequently, a direct computation yields, for  $u\in S^{n-1}$,
\begin{align}\label{pr}
&\lim_{\varepsilon \to 0^+} \frac{G_{\varepsilon}(u) - G_0(u)}{\varepsilon } \notag\\& = \lim_{\varepsilon \to 0^+} \frac{1}{\varepsilon} \int^{\rho_{\varepsilon}(u)}_{\rho_K(u)} g(ru)r^{n-1} \, dr \notag\\
& = \lim_{\varepsilon \to 0^+} \frac{\rho_{\varepsilon}(u) - \rho_K(u)}{\varepsilon} \cdot \lim_{\varepsilon \to 0^+}\frac{1}{\rho_{\varepsilon}(u) - \rho_K(u)} \int^{\rho_{\varepsilon}(u)}_{\rho_K(u)}  g(ru)r^{n-1} \, dr \notag\\
& =  -\frac{1}{p} \rho_K(u)^{1+p}\rho_L(u)^{-p}g(\rho_K(u)u)\rho_K(u)^{n-1}\notag\\
& =-\frac{1}{p}  \rho_K(u)^{n+p}\rho_L(u)^{-p}g(r_K(u)).
\end{align}
Note that $g(r_K(u))$ exists from the assumption
on $g$. The derivative \eqref{pr} is obviously dominated by the integrable function $-\frac{1}{p}  \max_{u \in S^{n-1}} \{\rho_K(u)^{n+p}\rho_L(u)^{-p}\} g(r_K(u))$. Therefore, by the dominated convergence theorem, we obtain
\begin{align}
&\lim_{\varepsilon\rightarrow
				0^+}\dfrac{\mu(K\widetilde{+}_{-p}\varepsilon\cdot
				L)-\mu(K)}{\varepsilon}\notag\\
&=\int_{S^{n-1}} G_0'(u) d\sigma_{n-1}(u)  \notag\\
&=-\frac{1}{p}\int_{S^{n-1}}\rho_K(u)^{n+p}\rho_L(u)^{-p}g(r_K(u))d\sigma_{n-1}(u)\notag\\
			&=-\frac{n}{p} \int_{S^{n-1}}\Big(\dfrac{\rho_L(u)}{\rho_K(u)}\Big)^{-p} d\widetilde{V}_{\mu,K}(u), \notag
\end{align}
as desired.
\end{proof}

Thus, for a measure $\mu$ in Definition \ref{mud}, we are able to define the dual $L_p$ mixed $\mu$-measure $\widetilde{V}_{\mu,-p}(K,L)$ of $K,L\in \mathcal {S}^n_{o}$  by
	\begin{align}\label{LMV}
	\widetilde{V}_{\mu,-p}(K,L):=-\frac pn\lim_{\varepsilon\rightarrow
		0^+}\dfrac{\mu(K\widetilde{+}_{-p}\varepsilon\cdot
		L)-\mu(K)}{\varepsilon}=\int_{S^{n-1}}\Big(\dfrac{\rho_L(u)}{\rho_K(u)}\Big)^{-p} d\widetilde{V}_{\mu,K}(u).
	\end{align}
In particular, \begin{equation}\label{KK} \widetilde{V}_{\mu,-p}(K,K)=\int_{S^{n-1}} d\widetilde{V}_{\mu,K}=|\widetilde{V}_{\mu,K}|.\end{equation}
If $\mu$ is the Lebesgue measure on $\mathbb{R}^n$, then $\widetilde{V}_{\mu,-p}(K,L)$ reduces to the dual mixed volume $\widetilde{V}_{-p}(K,L)$ \cite{L96} given by  
\begin{equation*}
	\widetilde{V}_{-p}(K,L)=-\frac{p}{n}\lim_{\varepsilon\rightarrow
		0^+}\dfrac{V(K\widetilde{+}_{-p}\varepsilon\cdot
		L)-V(K)}{\varepsilon}=\int_{S^{n-1}}\Big(\dfrac{\rho_L(u)}{\rho_K(u)}\Big)^{-p} d\widetilde{V}_{K}(u).
\end{equation*}

To facilitate the formulation of Problem  $\widetilde M_{\mu,p}$ for the case $p\rightarrow \infty$, we shall require the following
normalized dual $L_p$  mixed $\mu$-measure $\widehat{V}_{\mu, -p}(K, L)$:
\begin{align}\label{Kp}
	\widehat{V}_{\mu, -p}(K, L)=\left(\frac{\widetilde{V}_{\mu, -p}(K,L)}{\widetilde{V}_{\mu,-p}(K,K)}\right)^{-\frac{1}{p}}=\left(\frac{1}{|\widetilde{V}_{\mu,K}|}
	\int_{S^{n-1}} \Big(\dfrac{\rho_L(u)}{\rho_K(u)}\Big)^{-p} d\widetilde{V}_{\mu,K}(u) \right)^{-\frac{1}{p}},
\end{align}
and 
\begin{align} \label{wuqiong}
\widehat{V}_{\mu, -\infty}(K,L)=\lim_{p\rightarrow \infty} \widehat{V}_{\mu, -p}(K, L)=\min\big\{\rho_L(u)/\rho_K(u):u \in \mathrm{supp}(\widetilde{V}_{\mu,K})\big\}.
\end{align}
Obviously, for $c > 0 $ and $0<p\le \infty$,
\begin{equation} \label{cxx}
\widehat{V}_{\mu, -p}(K,c L)=c\widehat{V}_{\mu, -p}(K, L).
\end{equation}

Let $\mu^{\phi}$ be the measure that has the density function
	$g\circ \phi$ for some $\phi \in \textrm{GL}(n)$. Then,	
		for $\phi\in \textrm{GL}(n)$ and $K\in
		\mathcal {S}_{o}^n$,
	\begin{align}\label{SL}
		\mu(\phi K)=\int_{\phi K} g(x)dx=|\det \phi|\int_{K}g(\phi x)dx=|\det \phi|\mu^\phi( K).
	\end{align}	

The affine nature for the newly  defined dual $L_p$ mixed $\mu$-measure is demonstrated as follows.
\begin{lem}
Let $K,L\in\mathcal {S}_{o}^n$ and $0<p<\infty$. Then, for 
$\phi\in \mathrm{GL}(n)$,
\begin{equation}\label{aff1}
	\widetilde{V}_{\mu, -p}(\phi K, \phi L)=|\det \phi|\widetilde{V}_{\mu^{\phi}, -p}(K,L).
\end{equation}
	\end{lem}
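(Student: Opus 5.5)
The cleanest route goes through the variational definition \eqref{LMV}: transport the $L_p$-harmonic radial combination under $\phi$, then apply the change-of-variables identity \eqref{SL}. No manipulation of spherical integrals is needed.

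First I check that linear maps commute with the combination:
\[
\phi K\widetilde{+}_{-p}\varepsilon\cdot\phi L=\phi\bigl(K\widetilde{+}_{-p}\varepsilon\cdot L\bigr).
\]
By \eqref{rsl}, for $x\neq o$,
\[
\rho_{\phi K\widetilde{+}_{-p}\varepsilon\cdot\phi L}^{-p}(x)=\rho_{\phi K}^{-p}(x)+\varepsilon\rho_{\phi L}^{-p}(x)=\rho_K^{-p}(\phi^{-1}x)+\varepsilon\rho_L^{-p}(\phi^{-1}x),
\]
and this equals $\rho_{\phi(K\widetilde{+}_{-p}\varepsilon\cdot L)}^{-p}(x)$.

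Second, \eqref{SL} applied to this identity and to $K$ gives
\[
\mu(\phi K\widetilde{+}_{-p}\varepsilon\cdot\phi L)-\mu(\phi K)=|\det\phi|\bigl(\mu^{\phi}(K\widetilde{+}_{-p}\varepsilon\cdot L)-\mu^{\phi}(K)\bigr).
\]
Dividing by $\varepsilon$, letting $\varepsilon\to0^+$, and multiplying by $-p/n$ yields \eqref{aff1} directly from the definition \eqref{LMV}.

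The one step that needs care is justifying that both limits exist and equal the integral formula. This means Theorem~\ref{m1} must apply to the pair $(\phi K,\mu)$ and to the pair $(K,\mu^\phi)$.
\begin{itemize}
\item The density $g\circ\phi$ is locally integrable because $\phi$ is a linear isomorphism.
\item $\partial(\phi K)=\phi(\partial K)$, and $\phi$ is bi-Lipschitz, so it preserves $\mathcal H^{n-1}$-null sets.
\item $x$ is a Lebesgue point of $g\circ\phi$ if and only if $\phi x$ is a Lebesgue point of $g$. This holds because $\phi$ maps balls to ellipsoids comparable to balls, so the Lebesgue differentiation property transfers.
\end{itemize}
Hence the hypothesis for $\phi K$ relative to $g$ is equivalent to the hypothesis for $K$ relative to $g\circ\phi$. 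Under it, both sides of \eqref{aff1} are defined via \eqref{LMV}, and the equality follows.

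As a fallback, one could verify \eqref{aff1} directly from the integral representation. This would use the polar-coordinate change of variables $u\mapsto \phi^{-1}u/|\phi^{-1}u|$, whose Jacobian is $|\det\phi|^{-1}|\phi^{-1}u|^{-n}$, together with the homogeneity \eqref{e-2}. That route is heavier, so I would use the variational argument above.
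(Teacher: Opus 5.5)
Your proposal is correct and follows the paper's proof. The paper likewise combines the commutation $\phi(K\widetilde{+}_{-p}\varepsilon\cdot L)=\phi K\widetilde{+}_{-p}\varepsilon\cdot \phi L$ with \eqref{SL} and the definition \eqref{LMV}; it cites Lutwak for the commutation, whereas you verify it via \eqref{rsl}, and your check that the Lebesgue-set hypothesis of Theorem~\ref{m1} transfers between $(\phi K,g)$ and $(K,g\circ\phi)$ is sound and fills in a detail the paper leaves implicit.
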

\begin{proof}
	We know from \cite[Proposition 1.7]{L96} that
	\begin{equation} \label{a1}
		\phi(K\widetilde{+}_{-p}\varepsilon\cdot L)=\phi K\widetilde{+}_{-p}\varepsilon\cdot \phi L.
	\end{equation}
Thus, by \eqref{LMV}, \eqref{a1}, and \eqref{SL}, we immediately get
\begin{align*}
	\widetilde{V}_{\mu, -p}(\phi K, \phi L)
	&=-\frac{p}{n}\lim_{\varepsilon\rightarrow
		0^+}\frac{\mu(\phi K\widetilde{+}_{-p}\varepsilon\cdot \phi L)-\mu(\phi K)}
	{\varepsilon} \nonumber \\
	&=-\frac{p}{n} \lim_{\varepsilon\rightarrow
		0^+}\frac{\mu( \phi
		(K\widetilde{+}_{-p}\varepsilon\cdot L) )-\mu(\phi K)}
	{\varepsilon} \nonumber \\
	&=-\frac{p}{n}|\det \phi| \,\lim_{\varepsilon\rightarrow
		0^+}\frac{\mu^{\phi}(
		K\widetilde{+}_{-p}\varepsilon\cdot L)-\mu^{\phi}( K)}
	{\varepsilon} \nonumber \\
	&=|\det \phi|\widetilde{V}_{\mu^{\phi}, -p}(K,L).
\end{align*}
	\end{proof}

\noindent Therefore, by \eqref{Kp},
\begin{equation} \label{aff6}
	\widehat{V}_{\mu, -p}(\phi K,\phi L)= \widehat{V}_{\mu^\phi, -p}(K, L),
\end{equation}
for any $\phi \in \textrm{GL}(n)$. Taking $p \to \infty $ in \eqref{aff6} yields that
\begin{equation} \label{aff7}
	\widehat{V}_{\mu, -\infty}(\phi K,\phi L)= \widehat{V}_{\mu^\phi, -\infty}(K, L).
\end{equation}

For convenience,  we  also define the dual $L_p$ surface $\mu$-area measure $\widetilde{S}_{\mu,p}(K,\cdot)$ of $K\in\mathcal S_{o}^n$ on $S^{n-1}$  by 
\begin{equation}\label{dcm2}
	\widetilde{S}_{\mu,p}(K, \eta)=\int_\eta\rho_K(u)^{n+p}g(r_K(u))d\sigma_{n-1}(u), \quad 0<p<\infty,
\end{equation}
for any Borel set $\eta\subset S^{n-1}$.
 Then it follows from \eqref{LMV} and \eqref{dcm2} that
\begin{equation}\label{dcm1}
	\widetilde{V}_{\mu,-p}(K,L)=\frac1n\int_{S^{n-1}}\rho_L(u)^{-p} d\widetilde{S}_{\mu,p}(K, u).
\end{equation}
It is worth mentioning that, within the framework of  weighted $L_p$ Brunn-Minkowski theory,  the $L_p$ surface $\mu$-area measure $S_{\mu,
	p}(K, \cdot)$ of  $K \in
\mathcal {K}^n_{o}$    was introduced by Livshyts \cite{Li} for $p=1$ and Wu
\cite{Wu} for $p>1$.

Suppose  $\nu$ is a Borel measure on $S^{n-1}$. For $\phi \in \textrm{GL}(n)$ and $0<p<\infty$, the dual $L_p$ image of $\nu$ under $\phi$, denoted by $\phi_{p^\vdash}\nu$, is defined by
\begin{equation}\label{p1}
	\int_{S^{n-1}} f(u)d\phi_{p^\vdash}\nu(u)=\int_{S^{n-1}}|\phi u|^pf(\overline{\phi u} )d\nu(u), 
\end{equation}
for each Borel function $f:S^{n-1}\to \mathbb R$. Here $\overline{\phi u}=\phi u/|\phi u|$.

\begin{lem}\label{pp1}
	For $K\in \mathcal {S}^n_{o}$ and $\phi \in \emph{GL}(n)$,
	\begin{equation}\label{aff0}
		d\widetilde{S}_{\mu,p}(\phi K, \cdot) =|\det \phi|d\phi_{p^\vdash} \widetilde{S}_{\mu^{\phi} ,p}(K,\cdot).
	\end{equation}
\end{lem}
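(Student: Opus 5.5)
The plan is to verify \eqref{aff0} by testing both sides against an arbitrary bounded Borel function $f:S^{n-1}\to\mathbb R$. By \eqref{dcm2} and \eqref{p1}, the claim is equivalent to
\begin{equation*}
\int_{S^{n-1}} f(v)\rho_{\phi K}(v)^{n+p}g\big(r_{\phi K}(v)\big)d\sigma_{n-1}(v)
=|\det\phi|\int_{S^{n-1}}|\phi u|^p f(\overline{\phi u})\rho_K(u)^{n+p}(g\circ\phi)\big(r_K(u)\big)d\sigma_{n-1}(u).
\end{equation*}
First, though, I would check that the right-hand side makes sense. The density of $\mu^\phi$ is $g\circ\phi$, and this lies in $L^1_{\mathrm{loc}}$. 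Moreover, $\partial K=\phi^{-1}(\partial(\phi K))$ lies, up to an $\mathcal H^{n-1}$-null set, in the Lebesgue set of $g\circ\phi$. This holds because linear bijections map Lebesgue points to Lebesgue points, since they distort balls into ellipsoids of comparable size, and they preserve $\mathcal H^{n-1}$-null sets.

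The computation itself is a change of variables on the sphere via the map $u\mapsto v=\overline{\phi u}$, which is a diffeomorphism of $S^{n-1}$ with inverse $v\mapsto\overline{\phi^{-1}v}$. I will use three ingredients.
\begin{enumerate}
\item By \eqref{rsl} and \eqref{e-2}, $\rho_{\phi K}(\overline{\phi u})=\rho_K\big(u/|\phi u|\big)=|\phi u|\rho_K(u)$.
\item Consequently, $r_{\phi K}(\overline{\phi u})=|\phi u|\rho_K(u)\,\phi u/|\phi u|=\phi\big(r_K(u)\big)$, so $g(r_{\phi K}(v))=(g\circ\phi)(r_K(u))$.
\item The Jacobian formula is $d\sigma_{n-1}(\overline{\phi u})=|\det\phi|\,|\phi u|^{-n}d\sigma_{n-1}(u)$.
\end{enumerate}
Substituting these into the left-hand side gives the integrand
\begin{equation*}
f(\overline{\phi u})\,|\phi u|^{n+p}\rho_K(u)^{n+p}(g\circ\phi)(r_K(u))\,|\det\phi|\,|\phi u|^{-n},
\end{equation*}
which is exactly the right-hand side.

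The only step needing real justification is the Jacobian formula (iii), and I expect it to be the main obstacle. I would derive it from polar coordinates rather than by differential geometry. Take a Borel set $\eta\subset S^{n-1}$ and the cone piece $C_\eta=\{tu:0\le t\le 1,u\in\eta\}$. On one hand, $V(\phi C_\eta)=|\det\phi|V(C_\eta)=|\det\phi|\,\sigma_{n-1}(\eta)/n$. On the other hand, $\phi C_\eta$ is the star-shaped cone over $\overline{\phi\eta}$ with radial function $v\mapsto 1/|\phi^{-1}v|$, by \eqref{afff}. Hence
\begin{equation*}
V(\phi C_\eta)=\frac1n\int_{\overline{\phi\eta}}|\phi^{-1}v|^{-n}d\sigma_{n-1}(v).
\end{equation*}
Comparing the two expressions for all $\eta$ identifies the pushforward of $\sigma_{n-1}$ under $v\mapsto\overline{\phi^{-1}v}$. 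Then $|\phi^{-1}v|=1/|\phi u|$ when $v=\overline{\phi u}$, which yields (iii).

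Finally, the identity extends from bounded Borel $f$ to all nonnegative or integrable Borel $f$ by monotone convergence, which proves \eqref{aff0}.
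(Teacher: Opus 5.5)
Your argument is correct, and it takes a different route from the paper's.

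\textbf{The paper's route.} The paper never computes a Jacobian. It starts from the affine identity \eqref{aff1}, which it obtained from three ingredients: the variational formula of Theorem~\ref{m1}, the relation $\phi(K\widetilde{+}_{-p}\varepsilon\cdot L)=\phi K\widetilde{+}_{-p}\varepsilon\cdot\phi L$, and $\mu(\phi K)=|\det\phi|\,\mu^{\phi}(K)$. It rewrites \eqref{aff1} as $\widetilde V_{\mu,-p}(\phi K,L)=|\det\phi|\,\widetilde V_{\mu^{\phi},-p}(K,\phi^{-1}L)$. Then, via \eqref{dcm1}, \eqref{p1} and $\rho_{\phi^{-1}L}(u)^{-p}=|\phi u|^{p}\rho_L(\overline{\phi u})^{-p}$, it shows that the two measures in \eqref{aff0} integrate every $\rho_L^{-p}$ identically. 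It concludes by a uniqueness argument, using $\rho_{L^*}^{-p}=h_L^p$ and the density of $\{ch_L^p-ch_{B^n}^p\}$ in $C(S^{n-1})$.

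\textbf{Your route.} You test \eqref{aff0} directly with the substitution $v=\overline{\phi u}$. You use the pointwise identities $\rho_{\phi K}(\overline{\phi u})=|\phi u|\rho_K(u)$ and $r_{\phi K}(\overline{\phi u})=\phi(r_K(u))$, together with the spherical Jacobian $|\det\phi|\,|\phi u|^{-n}$. You derive the Jacobian neatly from cone volumes. Strictly, the cone comparison identifies the image of the weighted measure $|\phi^{-1}v|^{-n}\,d\sigma_{n-1}(v)$. Since that weight is positive and continuous, this is equivalent to your formula (iii).

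\textbf{What each route buys.} The paper's route is short given the machinery already in place, and it shows that \eqref{aff0} is forced by the variational structure. Yours is self-contained and more robust:
\begin{enumerate}
\item It does not depend on Theorem~\ref{m1}. In particular it avoids that proof's dominated-convergence step and its tacit integrability of $g\circ r_K$.
\item It needs no Riesz-type uniqueness argument for finite measures, and it works for nonnegative test functions even if the measures were infinite.
\item Your check that $\partial K$ lies $\mathcal H^{n-1}$-a.e.\ in the Lebesgue set of $g\circ\phi$ supplies a hypothesis the paper uses silently. The paper needs it when it invokes the integral representation for $\widetilde V_{\mu^{\phi},-p}(K,\cdot)$ in \eqref{aff1} and \eqref{dcm1}.
\end{enumerate}
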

\begin{proof}
Using \eqref{aff1}, we  have 
			\begin{equation*} \label{aff2}
\widetilde{V}_{\mu, -p}(\phi K,L)=|\det \phi|   \widetilde{V}_{\mu^{\phi}, -p}(K,\phi^{-1} L).
			\end{equation*}
Equivalently, by \eqref{dcm1},
\begin{equation}\label{r1}
\int_{S^{n-1}}\rho_L(u)^{-p}d\widetilde{S}_{\mu ,p}(\phi K,u)=|\det \phi|\int_{S^{n-1}}\rho_{\phi^{-1}L}(u)^{-p}d\widetilde{S}_{\mu^{\phi} ,p}( K,u).
			\end{equation}
Thus, by \eqref{p1}, \eqref{e-2}, \eqref{rsl},  and \eqref{r1}, we obtain
	\begin{align*}
	\int_{S^{n-1}}\rho_L(u)^{-p} d\phi_{p^\vdash} \widetilde{S}_{\mu^{\phi} ,p}(K,u)
	&=\int_{S^{n-1}}|\phi u|^p\rho_L\big(\overline{\phi u}\big)^{-p} d\widetilde{S}_{\mu^{\phi} ,p}(K,u)
\nonumber \\&=\int_{S^{n-1}}\rho_{\phi^{-1}L}\big( u\big)^{-p} d\widetilde{S}_{\mu^{\phi} ,p}(K,u)
\nonumber \\&=\frac{1}{|\det \phi|}\int_{S^{n-1}}\rho_L(u)^{-p}d\widetilde{S}_{\mu ,p}(\phi K,u).
\end{align*}
Now, the desired result \eqref{aff0} immediately follows from \eqref{rh} and the fact that the
set $\{ch^p_L-ch^p_{B^n}:L\in\mathcal {K}_{o}^n, c>0\}$ is dense in $C(S^{n-1})$  (see, e.g., \cite[Lemma 2.1]{LYZ18}).
	\end{proof}

	\section{Dual $L_p$ John ellipsoids for general measures}\label{4}

In view of \eqref{Kp} and \eqref{wuqiong}, Problem $\widetilde{M}_{\mu, p}$ can be reformulated as follows.

\noindent\textbf{
	Problem $\widetilde{M}_{\mu, p}$.} {\em   Given a star
	body $K\in
	\mathcal {S}_{o}^n$ and $0< p\le\infty$, find an origin-symmetric ellipsoid $E$ which solves the following optimization problem:
	\begin{equation}\label{yh}
		\min_{E\in
			\mathcal{E}^n_e}\Big(\frac{V(E)}{\omega_n}\Big)^\frac{1}{n}\quad \textrm{subject~to}
		\quad \widehat{V}_{\mu, -p}(K, E) \ge 1.
\end{equation}}
Its dual problem can be written as:

\noindent\textbf{Problem
	$\widehat{M}_{\mu,p}$.} {\em  Given a star
	body $K\in
	\mathcal {S}_{o}^n$ and $0< p\le\infty$, find an origin-symmetric ellipsoid $E$ which solves the following optimization problem:
\begin{equation}\label{duyh}
	\max_{E\in
		\mathcal{E}^n_e} \widehat{V}_{\mu, -p}(K, E) \quad \textrm{subject~to}
	\quad \Big(\frac{V(E)}{\omega_n}\Big)^\frac{1}{n} \le  1.
\end{equation}}

From \eqref{cxx}, it is easy to see that to find a solution to
\eqref{yh} is equivalent to solve the following problem
\begin{equation}\label{eyh}
	\min_{E\in
			\mathcal{E}^n_e}\Big(\frac{V(E)}{\omega_n}\Big)^\frac{1}{n}\quad \textrm{subject~to}
	\quad \widehat{V}_{\mu, -p}(K, E) = 1.
\end{equation} 
Similarly,  to find a solution to \eqref{duyh} is equivalent  to solve the following problem
\begin{equation}\label{eduyh}
	\max_{E\in
		\mathcal{E}^n_e} \widehat{V}_{\mu, -p}(K, E) \quad \textrm{subject~to}
	\quad \Big(\frac{V(E)}{\omega_n}\Big)^\frac{1}{n}= 1.
\end{equation}

The next lemma shows that the solutions to  Problems ${\widetilde{M}_{\mu, p}}$ and
$\widehat{M}_{\mu,p}$ are closely related.
   
\begin{lem} \label{dual}
Let $K\in  \mathcal {S}_{o}^n$ and $0< p\le\infty$. 
	
$\mathrm{(1)}$ If $E_0$ is a solution to Problem ${\widetilde{M}_{\mu, p}}$, then
$\big(\frac{\omega_n}{V(E_0) }\big)^{{\frac{1}{n}}}E_0$ is a solution  to  Problem $\widehat{M}_{\mu,p}$.

$\mathrm{(2)}$ If $E^{\prime}_0$ is  a solution to Problem  $\widehat{M}_{\mu, p}$, then
$\widehat{V}_{\mu, -p}(K, E^{\prime}_0)^{-1}E^{\prime}_0$ is a solution to Problem ${\widetilde{M}_{\mu,p}}$.
\end{lem}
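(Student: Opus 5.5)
The plan is to use only two scaling identities: the homogeneity \eqref{cxx}, $\widehat{V}_{\mu,-p}(K,cE)=c\,\widehat{V}_{\mu,-p}(K,E)$, and $(V(cE)/\omega_n)^{1/n}=c\,(V(E)/\omega_n)^{1/n}$ for $c>0$. The ellipsoid class $\mathcal{E}^n_e$ is closed under positive dilations, so every rescaled candidate stays admissible. Throughout I write $v(E)=(V(E)/\omega_n)^{1/n}$ and $w(E)=\widehat{V}_{\mu,-p}(K,E)$. Both functions are positive and positively $1$-homogeneous in $E$ (for $p=\infty$ this follows directly from \eqref{wuqiong}). Hence the ratio $w(E)/v(E)$ is invariant under dilation.

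The key observation is that both problems amount to maximizing $w(E)/v(E)$ over $\mathcal{E}^n_e$. For Problem $\widetilde{M}_{\mu,p}$, take any admissible $E$, so $w(E)\ge 1$. Then $v(E)\ge v(E)/w(E)$. Conversely, for any $E$ the ellipsoid $E/w(E)$ satisfies $w=1$ and has $v=v(E)/w(E)$. So the infimum in \eqref{yh} equals $\inf_E v(E)/w(E)$. Likewise the supremum in \eqref{duyh} equals $\sup_E w(E)/v(E)$, by rescaling any $E$ to $E/v(E)$.

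\textbf{Part (1).} Let $E_0$ solve Problem $\widetilde{M}_{\mu,p}$. First I note $w(E_0)=1$: if $w(E_0)>1$, then $E_0/w(E_0)$ would be admissible with strictly smaller volume, a contradiction. Set $E_0'=v(E_0)^{-1}E_0=(\omega_n/V(E_0))^{1/n}E_0$, so $v(E_0')=1$ and $w(E_0')=1/v(E_0)$. Now take any $E$ with $v(E)\le 1$. The ellipsoid $E/w(E)$ is admissible for $\widetilde{M}_{\mu,p}$, so $v(E_0)\le v(E)/w(E)\le 1/w(E)$. This gives $w(E)\le 1/v(E_0)=w(E_0')$, so $E_0'$ solves Problem $\widehat{M}_{\mu,p}$.

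\textbf{Part (2).} Let $E_0'$ solve Problem $\widehat{M}_{\mu,p}$. I first show $v(E_0')=1$: otherwise $E_0'/v(E_0')$ is admissible and has strictly larger $w$. Set $E_0=w(E_0')^{-1}E_0'$, so $w(E_0)=1$ and $v(E_0)=1/w(E_0')$. For any $E$ with $w(E)\ge 1$, the ellipsoid $E/v(E)$ is admissible for $\widehat{M}_{\mu,p}$. Hence $w(E)/v(E)\le w(E_0')$, which gives $v(E)\ge w(E)/w(E_0')\ge 1/w(E_0')=v(E_0)$. So $E_0$ solves Problem $\widetilde{M}_{\mu,p}$.

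There is no real obstacle here. The only point needing care is that $w(E)\in(0,\infty)$ for every ellipsoid, so that the rescalings are legitimate. This holds because $\rho_E$ and $\rho_K$ are continuous and positive on $S^{n-1}$. It also requires $|\widetilde{V}_{\mu,K}|>0$, which is needed for $\widehat{V}_{\mu,-p}$ to be defined at all; in the $p=\infty$ case it guarantees the support in \eqref{wuqiong} is nonempty.
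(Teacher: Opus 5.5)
Your proof is correct and follows essentially the same route as the paper: you rescale an arbitrary competitor by $\widehat{V}_{\mu,-p}(K,E)^{-1}$ (resp.\ $(\omega_n/V(E))^{1/n}$) to make it admissible for the other problem, then compare using the homogeneity \eqref{cxx}. The only difference is that you justify the normalizations $\widehat{V}_{\mu,-p}(K,E_0)=1$ and $V(E_0')=\omega_n$ with a short rescaling argument, whereas the paper takes them from the reformulations \eqref{eyh} and \eqref{eduyh}.
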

\begin{proof}
(1) Suppose $E_0$ is a solution to  Problem ${\widetilde{M}_{\mu, p}}$. For any $E\in
\mathcal{E}^n_e$ 
with $(V(E)/\omega_n)^\frac{1}{n}  \le 1$, it follows from \eqref{cxx} that
\begin{equation*}
	\widehat{V}_{\mu, -p}(K, \widehat{V}_{\mu, -p}(K, E)^{-1}E)=1.
\end{equation*}
Thus, the ellipsoid $\widehat{V}_{\mu, -p}(K, E)^{-1}E$ satisfies the constraint condition of  Problem ${\widetilde{M}_{\mu,p}}$. By \eqref{yh} and the fact that $E_0$ is a solution to   Problem ${\widetilde{M}_{\mu, p}}$, we have
\begin{equation}\label{re}
	\Big(\frac{V(E_0)}{\omega_n}\Big)^\frac{1}{n}
	\le\Big(\frac{V(\widehat{V}_{\mu, -p}(K, E)^{-1}E)}{\omega_n}\Big)^\frac{1}{n}
	=\frac{1}{\widehat{V}_{\mu, -p}(K, E)}\Big(\frac{V(E)}{\omega_n}\Big)^\frac{1}{n}.
\end{equation}
Then, by \eqref{cxx}, the fact that $\widehat{V}_{\mu, -p}(K, E_0)=1$, \eqref{re},  and $(V(E)/\omega_n)^\frac{1}{n}  \le 1$,  we  further have
 \begin{align*}
	\widehat{V}_{\mu, -p}\Big(K, \Big(\frac{\omega  _n}{V(E_0)}\Big)^{{\frac{1}{n}}}E_0\Big)
	&=\Big(\frac{\omega  _n}{V(E_0)}\Big)^{{\frac{1}{n}}}\widehat{V}_{\mu, -p}\Big(K, E_0\Big)\nonumber \\
&=\Big(\frac{\omega  _n}{V(E_0)}\Big)^{{\frac{1}{n}}}\nonumber \\
	&\geq \Big(\frac{\omega  _n}{V(E)}\Big)^{{\frac{1}{n}}}\widehat{V}_{\mu, -p}\Big(K, E\Big)\nonumber \\
	&\geq \widehat{V}_{\mu, -p}\Big(K, E\Big).
\end{align*}
Therefore, the ellipsoid $\big(\frac{\omega_n}{V(E_0) }\big)^{{\frac{1}{n}}}E_0$ is a solution to   Problem $\widehat{M}_{\mu, p}$.

(2) Suppose $E^{\prime}_0$ is a solution to  Problem $\widehat{M}_{\mu, p}$. For any $E\in
	\mathcal{E}^n_e$ with $\widehat{V}_{\mu, -p}(K, E) \ge 1$, the ellipsoid $(\omega_n/V(E))^{\frac{1}{n}}E$ satisfies the constraint condition of  Problem ${\widehat{M}_{\mu,p}}$.
By the fact that $E^{\prime}_0$ is a solution to  Problem ${\widehat{M}_{\mu, p}}$,  \eqref{cxx},  and $\widehat{V}_{\mu, -p}(K, E) \ge 1$, we have
\begin{align}\label{rr}
	\widehat{V}_{\mu, -p}(K, E^{\prime}_0)
	&\ge\widehat{V}_{\mu, -p}\Big(K, \Big(\frac{\omega  _n}{V(E)}\Big)^{{\frac{1}{n}}}E\Big)\nonumber \\
	&= \Big(\frac{\omega_n}{V(E)}\Big)^{{\frac{1}{n}}}\widehat{V}_{\mu, -p}(K, E)\nonumber \\
	&\ge \Big(\frac{\omega_n}{V(E)}\Big)^{{\frac{1}{n}}}.
\end{align}
Together with \eqref{rr} and the fact that $V(E^{\prime}_0)=\omega_n$, we further have 
\begin{align*}
	V(\widehat{V}_{\mu, -p}(K, E^{\prime}_0)^{-1}E^{\prime}_0)
	=\widehat{V}_{\mu, -p}(K, E^{\prime}_0)^{-n}V(E^{\prime}_0)\nonumber
	\le\frac{V(E)}{\omega_n}V(E^{\prime}_0)=V(E).
\end{align*}
Therefore, the ellipsoid $\widehat{V}_{\mu, -p}(K, E^{\prime}_0)^{-1}E^{\prime}_0$ is a solution to  Problem ${\widetilde{M}_{\mu, p}}$.
\end{proof}

The existence of the solutions to  Problem $\widetilde{M}_{\mu,p}$ is contained
in the following theorem.

\begin{thm}\label{ph2}Let $K\in \mathcal {S}^n_{o}$ and $0< p< \infty$.  If $\sigma_{n-1}(\emph{supp}(g\circ r_K))>0$, then
	 there exists a solution to
	Problem $\widetilde{M}_{\mu,p}$.
\end{thm}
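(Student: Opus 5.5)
By Lemma \ref{dual}(2), it suffices to show that Problem $\widehat{M}_{\mu,p}$ has a solution, i.e. that the functional $E\mapsto \widehat{V}_{\mu,-p}(K,E)$ attains its maximum on the class of origin-symmetric ellipsoids of volume $\omega_n$. Every such ellipsoid can be written as $E=\phi B^n$ with $\phi$ positive definite symmetric and $\det\phi=1$. By \eqref{afff}, $\rho_E(u)^{-p}=|\phi^{-1}u|^p$, so
\begin{equation*}
\widehat{V}_{\mu,-p}(K,\phi B^n)=\Big(\frac{1}{|\widetilde{V}_{\mu,K}|}\int_{S^{n-1}}|\phi^{-1}u|^p\rho_K(u)^{p}\,d\widetilde{V}_{\mu,K}(u)\Big)^{-1/p}.
\end{equation*}
The hypothesis $\sigma_{n-1}(\mathrm{supp}(g\circ r_K))>0$ guarantees $|\widetilde{V}_{\mu,K}|>0$, so the expression is well defined. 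Maximizing it is the same as minimizing $F(\phi)=\int_{S^{n-1}}|\phi^{-1}u|^p\rho_K^{p}\,d\widetilde{V}_{\mu,K}$ over this set of matrices.

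First, $F$ is continuous in $\phi$ on positive definite matrices. The integrand is continuous in $\phi$ and uniformly bounded on compact sets of $\phi$, because $\rho_K$ is bounded and $\widetilde{V}_{\mu,K}$ is a finite measure (as $g$ is locally integrable). Dominated convergence then gives continuity. Next, I take a minimizing sequence $\phi_i$ and write $\phi_i^{-1}=\psi_i$, which is positive definite with $\det\psi_i=1$. If the $\psi_i$ stay bounded, then, since the determinant is fixed, they also stay away from singular matrices. A subsequence then converges to some $\psi$ with $\det\psi=1$, and continuity yields a minimizer, giving the desired ellipsoid $E'_0=\psi^{-1}B^n$.

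The main obstacle is to rule out $\|\psi_i\|\to\infty$. I would argue by contradiction using the eigenvector $v_i\in S^{n-1}$ of $\psi_i$ for the largest eigenvalue $\lambda_i\to\infty$. For positive definite $\psi_i$, one has $|\psi_i u|\ge \lambda_i|u\cdot v_i|$, so
\begin{equation*}
F(\phi_i)\ge \lambda_i^p\int_{S^{n-1}}|u\cdot v_i|^p\rho_K(u)^p\,d\widetilde{V}_{\mu,K}(u).
\end{equation*}
After passing to a subsequence, $v_i\to v$. The function $w\mapsto\int|u\cdot w|^p\rho_K^p\,d\widetilde{V}_{\mu,K}$ is continuous on $S^{n-1}$, and it is strictly positive: $\widetilde{V}_{\mu,K}$ is absolutely continuous with density $\frac1n\rho_K^n\, g\circ r_K$, which is positive on a set of positive $\sigma_{n-1}$-measure, and such a set cannot be contained in the null great subsphere $v^\perp$. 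Hence this function has a positive minimum $c>0$, so $F(\phi_i)\ge c\lambda_i^p\to\infty$. This contradicts minimality, since $F(I)<\infty$.

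Therefore the minimizing sequence is bounded, a maximizer $E'_0$ of Problem $\widehat{M}_{\mu,p}$ exists, and Lemma \ref{dual}(2) converts it into a solution $\widehat{V}_{\mu,-p}(K,E'_0)^{-1}E'_0$ of Problem $\widetilde{M}_{\mu,p}$.
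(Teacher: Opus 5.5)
Your argument is correct, and its core estimate is the same as the paper's. Write $E_i=\phi_iB^n$ with $\phi_i$ symmetric. Then $E_i^*=\psi_iB^n$, so $\lambda_i=\frac12\mathrm{diam}(E_i^*)$, and your bound $|\psi_iu|\ge\lambda_i|u\cdot v_i|$ is exactly \eqref{e2}. This is combined with the positive lower bound \eqref{lower}, using $d\widetilde S_{\mu,p}(K,\cdot)=n\rho_K^p\,d\widetilde V_{\mu,K}$.

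The difference is in the packaging. The paper works on Problem $\widetilde M_{\mu,p}$ directly and extracts a convergent subsequence of the polars $E_i^*$ by Blaschke selection. It then needs a separate argument to show the limit is nondegenerate, via $V(E_i)=\omega_n^2/V(E_i^*)$ and comparison with $c'B^n$. It also leaves tacit the continuity of the constraint functional in the limit. You instead solve the volume-normalized Problem $\widehat M_{\mu,p}$ and transfer back through Lemma \ref{dual}(2). This buys you three things: the normalization $\det\psi_i=1$ makes nondegeneracy of the limit automatic, compactness reduces to boundedness in a finite-dimensional matrix space, and you check continuity of the objective explicitly by dominated convergence.

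Two small remarks. First, finiteness of $\widetilde V_{\mu,K}$ does not follow from local integrability of $g$. The set $\partial K$ is Lebesgue-null, so $g\circ r_K$ need not be $\sigma_{n-1}$-integrable even when $\partial K$ lies in the Lebesgue set of $g$. Finiteness is a tacit standing assumption of the paper, needed already to state Problem $\widetilde M_{\mu,p}$, so cite it as such rather than derive it. Second, passing to a subsequence $v_i\to v$ is unnecessary: the positive minimum $c$ of the continuous function $w\mapsto\int|u\cdot w|^p\rho_K^p\,d\widetilde V_{\mu,K}$ on $S^{n-1}$ already gives $F(\phi_i)\ge c\lambda_i^p$.
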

\begin{proof} From the assumption that $\sigma_{n-1}(\textrm{supp}(g\circ r_K))>0$ and \eqref{dcm2}, we see that  $\textrm{supp}(\widetilde{S}_{\mu,p}(K, \cdot))$ is  not
	contained in any great subsphere of $S^{n-1}$.  Therefore, there exists a constant $c>0$ such that
	\begin{equation}\label{lower}\min_{v\in S^{n-1}}\int_{S^{n-1}} |v\cdot
		u|^p d\widetilde{S}_{\mu, p}(K, u)>c,\end{equation}
	as the integral in this inequality is positive and continuous on $S^{n-1}$.
	
	From \eqref{eyh}, we shall assume that $\left\{
	E_i\right\}^\infty_{i=1}$ is a minimizing sequence of
	origin-symmetric ellipsoids to Problem $\widetilde{M}_{\mu, p}$ such that
	$\widehat{V}_{\mu, -p}(K, E_i)=1$ and
	\begin{equation}\label{np}
		\lim_{i \rightarrow \infty} V(E_i)=\inf_{E\in \mathcal{E}^n_e}
		\left\{ V(E): \widehat{V}_{\mu, -p}(K, E)=1 \right\}.
	\end{equation}

	Next, we show the sequence of  $E_i^\ast$ is bounded. 	For every $E_i^\ast$,
	there exists a $v_i\in S^{n-1}$ such that
	\begin{equation}\label{e2}
		\frac{1}{2}\textrm{diam}(E_i^\ast)|v_i\cdot u|\leq h_{E_i^\ast}(u),
		\quad\textrm{for~any}~ u\in S^{n-1},
	\end{equation} 
where $\textrm{diam}(E_i^\ast)$ is the diameter of $E_i^\ast$. 
By \eqref{KK}, the assumption of $\widehat{V}_{\mu, -p}(K, E_i)=1$, \eqref{Kp}, \eqref{dcm1}, \eqref{rh}, \eqref{e2}, and \eqref{lower}, we have  
	\begin{align*}
	 |\widetilde{V}_{\mu,K}|&=\widetilde{V}_{\mu,-p}(K,K)=\widetilde{V}_{\mu,-p}(K,E_i)\\
	 &=	\frac1n\int_{S^{n-1}}\rho_{E_i}(u)^{-p} d\widetilde{S}_{\mu,p}(K, u)\\
	 &=\frac1n\int_{S^{n-1}}h_{E_i^\ast}(u)^{p} d\widetilde{S}_{\mu,p}(K, u)\\
	 &\geq \frac1n\Big(\frac{1}{2}\textrm{diam}(E_i^\ast)\Big)^p\int_{S^{n-1}}|v_i\cdot u|^{p} d\widetilde{S}_{\mu,p}(K, u)\\
	 &>\frac cn\Big(\frac{1}{2}\textrm{diam}(E_i^\ast)\Big)^p.
	\end{align*}
That is,  the sequence
	$\left\{ E_i^\ast\right\}^\infty_{i=1}$  is bounded. 
	
	By the Blaschke selection theorem (see, e.g., \cite[Theorem
	1.8.7]{Schneider}), there exists a subsequence of  $\left\{E_i^\ast\right\}$, which will again be denoted by
	$\left\{E_i^\ast\right\}$ such that $\lim_{i \rightarrow \infty} E_i^\ast=L_0$ and $\left\{E_i\right\}$ satisfies \eqref{np}. To see that $L_0$ has non-empty interior, we shall argue by
contradiction. Otherwise, $\lim_{i \rightarrow \infty} V(E_i^\ast)=V(L_0)=0$.
Since $E_i^\ast\in \mathcal{E}^n_e$, we have \begin{equation}\label{fan}
 V(E_i)= \frac{\omega_n^2}{V(E_i^\ast)}\rightarrow\infty,\quad\textrm{if}~i \rightarrow \infty.
	\end{equation}
Let $c'=\widehat{V}_{\mu, p}(K, B^n)^{-1}$. It follows from \eqref{cxx} that $\widehat{V}_
	{\mu, p}(K, c'B^n)=1$. Then \eqref{np} yields that
	 \begin{equation*}
		V(E_i) \leq V(c'B^n)\quad \textrm{for a sufficient large}~i,
	\end{equation*}
which  contradicts to \eqref{fan}. Therefore, $L_0$ is an origin-symmetric convex body and $\lim_{i \rightarrow \infty} E_i=L_0^\ast$ is a solution to Problem  $\widetilde{M}_{\mu,p}$.
\end{proof}

\begin{thm}\label{LL}
Let $K\in  \mathcal {S}_{o}^n$ and  $0< p<\infty$.   If  $B^n$ solves  Problem ${\widehat{M}_{\mu, p}}$, then it is the unique solution. Moreover, $B^n$ solves Problem $\widehat{M}_{\mu, p}$  if and only if, for  $x \in \mathbb{R}^n  $,
	\begin{equation}\label{wy}
		\widetilde{V}_{\mu, -p}(K,B^n)|x|^2=\int_{S^{n-1}} \left | x\cdot u \right |^2d\widetilde{S}_{\mu,p}(K, u).
	\end{equation}
\end{thm}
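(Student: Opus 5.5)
Parametrize volume-one ellipsoids as $E=\phi B^n$ with $\phi$ positive definite symmetric and $\det\phi=1$. Then, by \eqref{afff} and \eqref{dcm1},
\[
\widetilde V_{\mu,-p}(K,\phi B^n)=\frac1n\int_{S^{n-1}}|\phi^{-1}u|^p\,d\widetilde S_{\mu,p}(K,u).
\]
By \eqref{Kp}, $\widehat V_{\mu,-p}(K,E)$ is a decreasing function of this quantity. So Problem $\widehat M_{\mu,p}$ becomes: minimize
\[
F(A)=\int_{S^{n-1}}\langle Au,u\rangle^{p/2}\,d\widetilde S_{\mu,p}(K,u)
\]
over positive definite symmetric $A=\phi^{-2}$ with $\det A=1$. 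The claim is that $A=I$ is a minimizer iff \eqref{wy} holds, and that it is then the only one.

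\textbf{Necessity.} Take $A_t=e^{tH}$ with $H$ symmetric and $\operatorname{tr}H=0$, so $\det A_t=1$. Differentiating at $t=0$ is legitimate because $\langle A_tu,u\rangle$ is smooth and bounded away from zero uniformly in $u$ for small $t$, and $\widetilde S_{\mu,p}(K,\cdot)$ is finite. This gives
\[
\frac{p}{2}\int_{S^{n-1}}\langle Hu,u\rangle\,d\widetilde S_{\mu,p}(K,u)=0
\]
for every traceless symmetric $H$. Hence the symmetric matrix $M=\int u\otimes u\,d\widetilde S_{\mu,p}(K,u)$ is orthogonal to all traceless symmetric matrices, so $M=\lambda I$. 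Taking traces gives $\lambda=\frac1n\widetilde S_{\mu,p}(K,S^{n-1})=\widetilde V_{\mu,-p}(K,B^n)$, using \eqref{dcm1} with $\rho_{B^n}\equiv1$. Thus $\int|x\cdot u|^2\,d\widetilde S_{\mu,p}(K,u)=\widetilde V_{\mu,-p}(K,B^n)|x|^2$, which is \eqref{wy}.

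\textbf{Sufficiency and uniqueness.} This is the main step, and I would use convexity rather than first-order information alone. Diagonalize $A=O^tDO$ with $D=\mathrm{diag}(\lambda_i)$ and $\prod\lambda_i=1$. Set $\nu=\widetilde S_{\mu,p}(K,\cdot)/\widetilde S_{\mu,p}(K,S^{n-1})$, a probability measure. Identity \eqref{wy} says $\int (Ou)_i^2\,d\nu=1/n$ for each $i$.

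For $p\ge2$, Jensen's inequality gives
\[
\int\langle Au,u\rangle^{p/2}d\nu\ge\Big(\int\langle Au,u\rangle\,d\nu\Big)^{p/2}=\Big(\frac{\operatorname{tr}A}{n}\Big)^{p/2}\ge1,
\]
by AM–GM.

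For general $p>0$, write $\langle Au,u\rangle=\sum_i\lambda_i w_i$ with $w_i=(Ou)_i^2$ and $\sum_i w_i=1$. Concavity of $\log$ gives $\log\langle Au,u\rangle\ge\sum_i w_i\log\lambda_i$. Integrating against $\nu$ and using $\int w_i\,d\nu=1/n$ yields $\int\log\langle Au,u\rangle\,d\nu\ge\frac1n\sum_i\log\lambda_i=0$. Then Jensen applied to $\exp$ gives
\[
\int\langle Au,u\rangle^{p/2}d\nu\ge\exp\Big(\frac p2\int\log\langle Au,u\rangle\,d\nu\Big)\ge1=F(I)/\widetilde S_{\mu,p}(K,S^{n-1}).
\]
So $B^n$ is optimal.

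For uniqueness, suppose equality holds. Equality in the log-concavity step forces, for $\nu$-a.e. $u$, all $\lambda_i$ with $w_i(u)>0$ to coincide. Equality in the exponential Jensen step forces $\langle Au,u\rangle$ to be $\nu$-a.e. constant, and that constant must be $1$.

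The main obstacle is turning this into $A=I$. I would use the hypothesis $\sigma_{n-1}(\mathrm{supp}(g\circ r_K))>0$, as in Theorem \ref{ph2}: $\widetilde S_{\mu,p}(K,\cdot)$ is absolutely continuous with respect to $\sigma_{n-1}$ and has a density that is positive on a set of positive measure. Then $\langle Au,u\rangle=1$ holds on a set of positive spherical measure. The quadratic form $\langle (A-I)u,u\rangle$ vanishes on this set; a nonzero quadratic form vanishes only on a set of measure zero in $S^{n-1}$ (its zero set is a real algebraic hypersurface). Hence $A=I$, so $B^n$ is the unique solution.
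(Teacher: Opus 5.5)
Your proof is correct. The necessity half is the same first-variation argument as the paper's. The paper perturbs by $(I+\varepsilon T)/\det(I+\varepsilon T)^{1/n}$ for arbitrary $T$ and tests $T=e_i\otimes e_j$; you use $e^{tH}$ with $H$ symmetric and traceless, followed by an orthogonal-complement argument.

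The sufficiency and uniqueness half takes a genuinely different route. The paper sets $F(a)=\int_{S^{n-1}}|\mathrm{diag}(a_1,\ldots,a_n)u|^p\,d\widetilde S_{\mu,p}(K,u)$ and asserts that $F$ is convex on $[0,\infty)^n$. It then uses $\nabla F(e)\parallel e$ (from \eqref{v1}) to place $F^{-1}([0,F(e)])$ in the half-space $\{a\cdot e\le n\}$, and concludes with AM--GM on $\{a_1\cdots a_n=1\}$. A general $P$ is reduced to a diagonal one through the transformation rule \eqref{aff0}. You instead rotate inside the integrand, read off $\int (Ou)_i^2\,d\nu=1/n$ from \eqref{wy}, and apply concavity of $\log$ followed by Jensen for $\exp$. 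This avoids \eqref{aff0} entirely.

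The real gain of your route is its range of validity. The paper's convexity claim for $F$ holds only for $p\ge 1$. For $0<p<1$, the map $t\mapsto F(te_1)=t^p\int|u_1|^p\,d\widetilde S_{\mu,p}(K,u)$ is strictly concave. Moreover, when $\widetilde S_{\mu,p}(K,\cdot)$ concentrates near $\pm e_1,\ldots,\pm e_n$, the sublevel sets of $F$ look like non-convex $\ell_p$-balls. So the paper's argument does not actually cover $0<p<1$. Yours does: it is essentially the paper's supporting-hyperplane argument carried out in the coordinates $a_i=e^{s_i}$, where convexity holds for every $p>0$. Your separate $p\ge 2$ case is therefore superfluous.

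What the paper's route gives for free is strictness, hence uniqueness, from the equality case of AM--GM. In your uniqueness step you import the hypothesis $\sigma_{n-1}(\mathrm{supp}(g\circ r_K))>0$, which is not part of this theorem. This is harmless: by \eqref{dcm2}, $\widetilde S_{\mu,p}(K,\cdot)$ is absolutely continuous, and it must be nonzero for $\widehat V_{\mu,-p}$ to be defined. But it is also unnecessary. Once $\langle Au,u\rangle=1$ for $\nu$-a.e.\ $u$, integrate against $\nu$ and use \eqref{wy} to get $\operatorname{tr}A=n$. Equality in AM--GM with $\det A=1$ then forces $A=I$, with no zero-set argument needed.
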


\begin{proof}
Suppose $T\in \mathrm{L}(n)$. Choose $\varepsilon_0$ sufficiently small so that $I+\varepsilon T$ is invertible for all $\varepsilon\in(-\varepsilon_0,\varepsilon_0)$. Let \begin{equation*}
		T_\varepsilon=\frac{I+\varepsilon T}{\mathrm{det}(I+\varepsilon T)^\frac{1}{n}},
	\end{equation*}
for $\varepsilon\in(-\varepsilon_0,\varepsilon_0)$. 
Denote by $E_\varepsilon =T_\varepsilon^{-1} B^n$. Clearly, $V(E_\varepsilon)=\omega_n$. If $B^n$ is an ${\widehat{M}_{\mu, p}}$ solution, then it follows from \eqref{eduyh} and \eqref{Kp} that $\widetilde{V}_{\mu, -p}(K,B^n)\leq \widetilde{V}_{\mu, -p}(K,E_\varepsilon)$. Hence, by \eqref{dcm1}, we have 
	\begin{equation*}
		\frac{d}{d\varepsilon}\Big(\widetilde{V}_{\mu, -p}(K,E_\varepsilon)\Big)\bigg|_{\varepsilon = 0}=\frac{d}{d\varepsilon} \Big(\frac{1}{n}\int_{S^{n-1}}\rho_{E_\varepsilon}(u)^{-p}d\widetilde{S}_{\mu,p}(K, u)\Big)\bigg|_{\varepsilon = 0}=0.
	\end{equation*}
Note that, by \eqref{afff}, 
\begin{equation*}
	\rho_{E_\varepsilon}(u)=\rho_{T_\varepsilon^{-1} B^n}(u)=\frac{1}{|T_\varepsilon u|}=\mathrm{det}(I+\varepsilon T)^\frac{1}{n}(u \cdot u+2\varepsilon u \cdot Tu+\varepsilon^2Tu\cdot Tu)^{-\frac{1}{2}}
	\end{equation*}
and 
\begin{equation*}
\frac{d}{d\varepsilon}\Big(\mathrm{det}(I+\varepsilon T)\Big)\bigg|_{\varepsilon = 0}=\mathrm{tr}(T).
	\end{equation*}
Then we have 
		\begin{equation}\label{11}
		\widetilde{V}_{\mu, -p}(K, B^n)\mathrm{tr}(T)=\int_{S^{n-1}}  (u\cdot Tu)d\widetilde{S}_{\mu,p}(K, u),
	\end{equation}
where 
\begin{equation*}
		\widetilde{V}_{\mu, -p}(K, B^n)=\frac1n\int_{S^{n-1}}  d\widetilde{S}_{\mu,p}(K, u).
	\end{equation*}
Let $e_1,\ldots,e_n$ denote the canonical basis for $\mathbb R^n$. Choosing  $T=e_i\otimes e_j$ for  $i,j\in\left \{ 1,...,n \right \}$ in \eqref{11}, we get
	\begin{equation}\label{v1}
		\widetilde{V}_{\mu, -p}(K, B^n)\delta_{i,j}=\int_{S^{n-1}} (u\cdot e_i)(u\cdot e_j)d\widetilde{S}_{\mu,p}(K, u),
	\end{equation}
which is equivalent to the desired equation \eqref{wy}.

	Conversely, suppose \eqref{wy} holds. We only need to prove that if $P$ is a positive-definite symmetric matrix with $\mathrm{det}(P)=1$, then 
	$\widetilde{V}_{\mu, -p}(K,PB^n)\geq	\widetilde{V}_{\mu, -p}(K,B^n)$ with equality if and only if $P=I$. 
		
	For~$a=(a_1,\ldots,a_n)\in [0,\infty)^n$, let
	\begin{equation*}
		F(a)=\int_{S^{n-1}}|\textrm{diag}(a_1,\ldots,a_n)u|^pd\widetilde{S}_{\mu,p}(K, u),
	\end{equation*} where~$\textrm{diag}(a_1,\ldots,a_n)$~denotes the diagonal matrix with diagonal elements~$a_1,\ldots,a_n$.
We claim that, for any~$a=(a_1,\ldots,a_n)\in
	(0,\infty)^n$~such that~$a_1a_2\cdots a_n=1$, \begin{equation}\label{F}F(a)\ge F(e),
	\end{equation}
	with equality if and only if~$a=e$. Here $e=(1,\ldots,1)$.

	It is easy to verify that~$F:[0,\infty)^n\rightarrow [0,\infty)$~is
	continuous and convex, and~$F(t a)$~is strictly increasing
	in~$t\in[0,\infty)$, for any~$a\in[0,\infty)^n$. Thus, the
	set~$F^{-1}([0,F(e)])$~is compact, convex and has non-empty
	interiors. Moreover, it follows from \eqref{v1} that 
	\begin{equation*}
		\frac{\partial F(a)}{\partial a_i}\Big|_{a=e}=p\int_{S^{n-1}}(u\cdot e_i)^2d\widetilde{S}_{\mu,p}(K, u)=p\widetilde{V}_{\mu, -p}(K, B^n),
	\end{equation*}
	for each~$i\in\{1,\ldots,n\}$. It means that
	$$\nabla F(e)=p\widetilde{V}_{\mu, -p}(K,B^n) e;$$	
i.e.,~$e$~is an outward
	normal vector of~$F^{-1}([0,F(e)])$~at the boundary point~$e$.
	Hence,
	\begin{equation*}
		F^{-1}([0,F(e)])\subset\{a\in\mathbb R^n: a\cdot e\le n\}.
	\end{equation*}
	On the other hand, by the AM/GM inequality, we have
	\begin{equation*}
		\{a\in (0,\infty)^n:a_1\cdots a_n=1\}\subset\{a\in\mathbb R^n:
		a\cdot e\ge n\},
	\end{equation*}
	and
	\begin{equation*}
		\{a\in (0,\infty)^n:a_1\cdots a_n=1\}\cap\{a\in\mathbb R^n: a\cdot
		e=n\}=\{e\}.
	\end{equation*}
	Consequently,
	\begin{equation*}
		F^{-1}([0,F(e)])\cap\{a\in (0,\infty)^n:a_1\cdots a_n=1\}=\{e\}.
	\end{equation*}
	Therefore, the desired inequality (\ref{F}) follows.

For a positive-definite symmetric matrix $P$  with $\mathrm{det}(P)=1$, we have  $P^{-1}=Q^tDQ$, where $D=\mathrm{diag}(a_1,...,a_n)$ is a diagonal matrix with eigenvalues $a_1,...,a_n$ such that $a_1a_2\cdots a_n=1$ and $Q$ is an orthogonal matrix. Then, it follows from \eqref{aff0} and \eqref{p1} that  
	\begin{align}\label{e-4}
		\int_{S^{n-1}}|Q^tDQu|^pd\widetilde{S}_{\mu,p}(K, u)&=\int_{S^{n-1}}|DQu|^pd\widetilde{S}_{\mu,p}(Q^{t}QK, u)\nonumber\\
		&=|\det Q^t|\int_{S^{n-1}}|DQu|^pdQ^{t}_{p^\vdash}\widetilde{S}_{\mu^{Q^t},p}(QK, u)\nonumber\\
		&=\int_{S^{n-1}}|DQ\overline{Q^tu}|^p|Q^tu|^pd\widetilde{S}_{\mu^{Q^t},p}(QK, u)\nonumber\\
		&=\int_{S^{n-1}}|Du|^pd\widetilde{S}_{\mu^{Q^t},p}(QK, u).
	\end{align}
Together with \eqref{dcm1}, \eqref{afff}, \eqref{e-4}, \eqref{F}, and \eqref{aff1}, we further have 
	\begin{align*}
		\widetilde{V}_{\mu, -p}(K,PB^n)&=\frac1n\int_{S^{n-1}}\rho_{PB^n}(u)^{-p}d\widetilde{S}_{\mu,p}(K, u)\\
			&=	\frac1n\int_{S^{n-1}}|Q^tDQu|^pd\widetilde{S}_{\mu,p}(K, u)\\
			&=\frac1n\int_{S^{n-1}}|Du|^pd\widetilde{S}_{\mu^{Q^t},p}(QK, u)\\
		&\geq\frac1n\int_{S^{n-1}}d\widetilde{S}_{\mu^{Q^t},p}(QK, u)\\
		&=\widetilde{V}_{\mu^{Q^t}, -p}(QK,B^n)\\
		&=|\det Q|\widetilde{V}_{\mu, -p}(K,Q^{-1}B^n)\\
		&=\widetilde{V}_{\mu, -p}(K,B^n),
	\end{align*}
	as desired. By the equality conditions of (\ref{F}), the previous inequality  holds
	if and only if~$P$~is an identity matrix.
\end{proof}

We are now in position to characterize  the solutions to Problem $\widehat{M}_{\mu,p}$.

\begin{thm}\label{t-1}
Let $K\in  \mathcal {S}_{o}^n$ and $0< p<\infty$. If  $\sigma_{n-1}(\emph{supp}(g\circ r_K))>0$, then  Problem ${\widehat{M}_{\mu, p}}$ have a unique solution. Moreover, an ellipsoid $E$ solves  Problem ${\widehat{M}_{\mu, p}}$ if and only if it satisfies 
 \begin{equation}\label{yh11}
 \widetilde{V}_{\mu, -p}(K,E)\rho_{E^{*}}(x)^{-2}=\int_{S^{n-1}} |x \cdot u|^2 \rho_E(u)^{2-p}d\widetilde{S}_{\mu,p}(K, u),
 \end{equation}
for all  $x \in \mathbb{R}^n\backslash \{o\}$.
\end{thm}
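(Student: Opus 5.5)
Existence follows from Theorem \ref{ph2} and Lemma \ref{dual}(1): the minimizer of Problem $\widetilde{M}_{\mu,p}$, rescaled to volume $\omega_n$, solves Problem $\widehat{M}_{\mu,p}$. For uniqueness and the characterization I will use affine invariance to reduce to the case $E=B^n$, already treated in Theorem \ref{LL}.

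Let $E\in\mathcal{E}^n_e$ with $V(E)=\omega_n$, and write $E=\phi B^n$ with $\phi\in \mathrm{SL}(n)$ (e.g. $\phi$ positive definite symmetric). By \eqref{aff1} with $|\det\phi|=1$, for every $E'\in\mathcal{E}^n_e$,
\[
\widetilde{V}_{\mu,-p}(K,E')=\widetilde{V}_{\mu,-p}\big(\phi(\phi^{-1}K),\phi(\phi^{-1}E')\big)=\widetilde{V}_{\mu^\phi,-p}(\phi^{-1}K,\phi^{-1}E').
\]
Also $E'\mapsto\phi^{-1}E'$ is a bijection of volume-$\omega_n$ origin-symmetric ellipsoids. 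Since $\widehat{V}$ is a decreasing function of $\widetilde{V}$ and the normalizer $|\widetilde{V}_{\mu,K}|$ is fixed, $E$ solves Problem $\widehat{M}_{\mu,p}$ for $(K,\mu)$ if and only if $B^n$ solves it for $(\phi^{-1}K,\mu^\phi)$. The hypothesis of Theorem \ref{LL} is only that $\mu$ has locally integrable density, and this holds for $\mu^\phi$ with density $g\circ\phi$, so the theorem applies.

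\emph{Uniqueness.} If $E_1=\phi B^n$ and $E_2$ both solve the problem, then $B^n$ and $\phi^{-1}E_2$ both solve the transformed problem. Theorem \ref{LL} says $B^n$ is then the unique solution, so $\phi^{-1}E_2=B^n$ and $E_2=E_1$.

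\emph{Characterization.} By Theorem \ref{LL}, $E=\phi B^n$ is a solution if and only if, for all $y$,
\[
\widetilde{V}_{\mu^\phi,-p}(\phi^{-1}K,B^n)|y|^2=\int_{S^{n-1}}|y\cdot u|^2\,d\widetilde{S}_{\mu^\phi,p}(\phi^{-1}K,u).
\]
I transport this back using Lemma \ref{pp1} with $K$ replaced by $\phi^{-1}K$:
\[
d\widetilde{S}_{\mu,p}(K,\cdot)=d\phi_{p^\vdash}\widetilde{S}_{\mu^\phi,p}(\phi^{-1}K,\cdot).
\]
Combined with \eqref{p1}, this gives
\[
\int_{S^{n-1}} f\,d\widetilde{S}_{\mu^\phi,p}(\phi^{-1}K,\cdot)=\int_{S^{n-1}} |\phi^{-1}u|^{p} f\big(\overline{\phi^{-1}u}\big)\,d\widetilde{S}_{\mu,p}(K,u),
\]
obtained by applying the lemma to $\tilde f(u)=|\phi^{-1}u|^pf(\overline{\phi^{-1}u})$. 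Take $f(u)=|y\cdot u|^2$ with $y=\phi^t x$. Then
\[
|y\cdot \overline{\phi^{-1}u}|^2=|x\cdot u|^2\,|\phi^{-1}u|^{-2},
\]
and $|\phi^{-1}u|=\rho_E(u)^{-1}$ by \eqref{afff}. The right side therefore becomes $\int|x\cdot u|^2\rho_E(u)^{2-p}\,d\widetilde{S}_{\mu,p}(K,u)$. On the left, $|\phi^t x|=h_{\phi B^n}(x)=h_E(x)=\rho_{E^*}(x)^{-1}$ by \eqref{9} and \eqref{rh}. Finally $\widetilde{V}_{\mu^\phi,-p}(\phi^{-1}K,B^n)=\widetilde{V}_{\mu,-p}(K,E)$ by \eqref{aff1}. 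Since $\phi^t$ is invertible, letting $y$ range over $\mathbb{R}^n$ is the same as letting $x$ range over $\mathbb{R}^n$, so this is exactly \eqref{yh11}.

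The main obstacle is the bookkeeping in this last transport step: getting the exponents of $|\phi^{-1}u|$ and the direction of $\phi$ versus $\phi^{-1}$ right in Lemma \ref{pp1}. It also needs the extension of \eqref{p1}, which was stated for Borel $f$, to the integrand $|y\cdot u|^2$; this is harmless because that integrand is continuous. I will verify the transport identity directly by testing it on $\rho_L^{-p}$, as in the proof of Lemma \ref{pp1}.
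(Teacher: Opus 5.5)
Your proposal is correct and follows essentially the same route as the paper: existence via Theorem \ref{ph2} and Lemma \ref{dual}, then for $E=\phi B^n$ with $\phi\in\mathrm{SL}(n)$ a reduction via \eqref{aff6}/\eqref{aff1} to the ball problem for $(\phi^{-1}K,\mu^{\phi})$, an application of Theorem \ref{LL}, and a transport back using \eqref{aff0}, \eqref{p1} and the substitution $y=\phi^t x$ (the paper's $z=\phi^{-t}x$). You are slightly more complete than the paper, since you spell out uniqueness and the sufficiency direction; restricting to $V(E)=\omega_n$ is also the right reading of the statement, because \eqref{yh11} is invariant under dilations of $E$.
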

\begin{proof} 	According to Lemma \ref{dual} and Theorem \ref{ph2}, we only need to prove \eqref{yh11} by using Theorem \ref{LL}. 
Assume that $E=\phi B^n$ solves   Problem $\widehat{M}_{\mu,p}$ for some $\phi\in \textrm{SL}(n)$. Then it follows from \eqref{aff6} that
\begin{equation*}
	\widehat{V}_{\mu, -p}(K, E)=\widehat{V}_{\mu, -p}(K,
	\phi B^n)=\widehat{V}_{\mu^{\phi}, -p}(\phi^{-1}K, B^n).
\end{equation*}
By \eqref{eduyh}, it means that the unit ball $B^n$ solves  Problem
 $\widehat{M}_{\mu^{\phi}, p}$ for $\phi^{-1}K$. Thus, by Theorem \ref{LL}, \eqref{aff0}, \eqref{p1}, \eqref{e-2} and \eqref{rsl}, we have 

\begin{align}\label{pf3}
	\widetilde{V}_{\mu^{\phi} , -p}(\phi^{-1}K,B^n)|x |^2 \nonumber 
&=\int_{S^{n-1}}|x \cdot u |^2d\widetilde{S}_{\mu^\phi,p}(\phi^{-1} K, u)\nonumber \\
	&= |\det \phi^{-1}|\int_{S^{n-1}}|x \cdot u |^2 d\phi_{p^\vdash}^{-1} \widetilde{S}_{\mu ,p}(K,u)\nonumber \\
	&= |\det \phi^{-1}|\int_{S^{n-1}}|x \cdot \overline{\phi^{-1} u} |^2|\phi^{-1}u|^pd \widetilde{S}_{\mu ,p}(K,u)\nonumber \\
		&= |\det \phi^{-1}|\int_{S^{n-1}}|x \cdot \phi^{-1} u|^2|\phi^{-1}u|^{p-2}d \widetilde{S}_{\mu ,p}(K,u).
\end{align}
But, by \eqref{aff1},
\begin{equation*}
	\widetilde{V}_{\mu^{\phi}, -p}\left(\phi^{-1}K, B^n\right)=|\det
	\phi^{-1}|\widetilde{V}_{\mu, -p}\left(K, \phi B^n\right).
\end{equation*}
Therefore, if we take  $z=\phi^{-t}x$ in \eqref{pf3}, then
\begin{align}\label{xz}
    \widetilde{V}_{\mu, -p}\left(K,
	\phi B^n\right)|\phi^t z|^2
	&=\int_{S^{n-1}}|\phi^t z\cdot \phi^{-1} u|^2
	|\phi^{-1} u |^{p-2} d \widetilde{S}_{\mu ,p}(K,u)\nonumber \\
	&=\int_{S^{n-1}}|z\cdot  u|^2
	|\phi^{-1} u |^{p-2} d \widetilde{S}_{\mu ,p}(K,u).
\end{align}
For $E=\phi B^n$ and $z \in \mathbb{R}^n\backslash \{o\}$, it follows from \eqref{pp} and \eqref{afff} that
\begin{equation*}
	\rho_{E}(z)=\rho_{\phi B^n}(z)=|\phi^{-1} z|^{-1}\quad \textrm{and}\quad \rho_{E^{*}}(z)=\rho_{\phi^{-t} B^n}(z)=|\phi^t z|^{-1}.
\end{equation*}
Consequently,  \eqref{xz} becomes
\begin{align*}
\widetilde{V}_{\mu, -p}\left(K,
E\right)\rho_{E^{*}}(z)^{-2}
	&=\int_{S^{n-1}}| z\cdot u|^2\rho_E(u) ^{2-p}
	d \widetilde{S}_{\mu ,p}(K,u),
\end{align*}
as desired.
\end{proof}

Now, Theorem \ref{t-2} follows from Lemma \ref{dual} and Theorem \ref{t-1} .
 
In view of \eqref{wuqiong} and \eqref{yh},  Problem $\widetilde{M}_{\mu, \infty}$ can be rephrased  as follows. 

\noindent\textbf{Problem $\widetilde{M}_{\mu, \infty}$.} {{\em    Given a star
		body $K\in
		\mathcal {S}_{o}^n$, find an origin-symmetric ellipsoid $E$ which solves the following constrained minimization problem:
	\begin{equation*}
		\min_{E\in
			\mathcal{E}^n_e} V(E)\quad \textrm{subject~to}
		\quad \widetilde{K}\subseteq E,
\end{equation*}
where $\widetilde{K}$ is a convex hull in $\mathbb{R}^n$ defined by \begin{equation}\label{e-9}
	\widetilde{K}=\mathrm{conv}\big\{r_K(u): u \in \mathrm{supp}(\widetilde{V}_{\mu,K})\big\}.
\end{equation}}
Clearly, $\widetilde{K}\subseteq  K$. Furthermore, if $\sigma_{n-1}(\mathrm{supp}(g\circ r_K))>0$, then $\mathrm{supp}(\widetilde{V}_{\mu,K})$ is not contained in any great subsphere of $S^{n-1}$ and hence $\widetilde{K}$ is a convex body in $\mathbb{R}^n$. Recall that 
the L\"{o}wner ellipsoid of a convex body $L$ is the unique ellipsoid of minimal volume containing $L$. Hence, if $E_0$ is the solution of Problem $\widetilde{M}_{\mu, \infty}$, then $E_0$ is the L\"{o}wner ellipsoid of the convex body $\mathrm{conv}\big\{\widetilde{K}\cup (-\widetilde{K})\big\}$. It means that the unique solution of Problem $\widetilde{M}_{\mu, \infty}$ of $K$ is exactly the  L\"{o}wner ellipsoid of $\mathrm{conv}\big\{\widetilde{K}\cup (-\widetilde{K})\big\}$. We recommend the survey article \cite{Henk} for more details of the L\"{o}wner ellipsoid.

Therefore, for $K\in
\mathcal {S}_{o}^n$ and $0< p\le\infty$, we call the
solution to  Problem $\widetilde{M}_{\mu, p}$ the dual $L_p$ John ellipsoid of $K$ for the measure $\mu$   and denote it by
$\widetilde{E}_{\mu, p}K$. The solution to   Problem $\widehat{M}_{\mu,p}$  is called the normalized dual $L_p$ John ellipsoid of $K$ for
the measure $\mu$   and  denoted by $\widehat{E}_{\mu, p}K$.

Replacing the dual conical measure  $\widetilde{V}_{K}$ by the dual conical $\mu$-measure $\widetilde{V}_{\mu,K}$ in \cite[Theorem 5.2]{YLW}, we  immediately obtain  the following inequality. 
\begin{thm}\label{vv1}
Let $K\in
	\mathcal {S}_{o}^n$ and $0< p\leq q\leq \infty$. If  $\sigma_{n-1}(\emph{supp}(g\circ r_K))>0$, then
	\begin{equation*}
		V(\widetilde{E}_{\mu, p}K)\leq V(\widetilde{E}_{\mu, q}K).
	\end{equation*}
\end{thm}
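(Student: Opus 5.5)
The inequality comes down to the monotonicity in $p$ of the normalized quantity $\widehat{V}_{\mu,-p}(K,L)$, followed by a comparison of feasible sets. The key observation is that
\[
\widehat{V}_{\mu,-p}(K,L)=\Big(\int_{S^{n-1}}\Big(\frac{\rho_L}{\rho_K}\Big)^{-p}\,d\nu\Big)^{-1/p},
\qquad d\nu=\frac{d\widetilde{V}_{\mu,K}}{|\widetilde{V}_{\mu,K}|}.
\]
Here $\nu$ is a probability measure on $S^{n-1}$; it is well defined because the hypothesis $\sigma_{n-1}(\mathrm{supp}(g\circ r_K))>0$ forces $|\widetilde{V}_{\mu,K}|>0$. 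So $\widehat{V}_{\mu,-p}(K,L)=\|f\|_{L^p(\nu)}^{-1}$ with $f=\rho_K/\rho_L$, a positive continuous function.

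First, I would show that $p\mapsto \widehat{V}_{\mu,-p}(K,L)$ is nonincreasing on $(0,\infty]$. For $0<p\le q<\infty$ this follows from Jensen's (or H\"older's) inequality on the probability space $(S^{n-1},\nu)$, which gives $\|f\|_{L^p(\nu)}\le \|f\|_{L^q(\nu)}$. For $q=\infty$ I would use $\|f\|_{L^p(\nu)}\le \operatorname{ess\,sup}_\nu f=\max_{\mathrm{supp}\,\nu} f$, which is valid because $f$ is continuous. The right-hand side equals $\big(\min_{\mathrm{supp}\,\widetilde{V}_{\mu,K}}\rho_L/\rho_K\big)^{-1}$, so by \eqref{wuqiong} this is exactly $\widehat{V}_{\mu,-p}(K,L)\ge \widehat{V}_{\mu,-\infty}(K,L)$. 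Here $\mathrm{supp}(\nu)=\mathrm{supp}(\widetilde{V}_{\mu,K})$.

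Second, I would apply this with $L=\widetilde{E}_{\mu,q}K$. By the formulation \eqref{yh}, this ellipsoid satisfies $\widehat{V}_{\mu,-q}(K,\widetilde{E}_{\mu,q}K)\ge 1$, so monotonicity gives $\widehat{V}_{\mu,-p}(K,\widetilde{E}_{\mu,q}K)\ge 1$. Hence $\widetilde{E}_{\mu,q}K$ is admissible for Problem $\widetilde{M}_{\mu,p}$. Since $\widetilde{E}_{\mu,p}K$ minimizes volume over that admissible class, $V(\widetilde{E}_{\mu,p}K)\le V(\widetilde{E}_{\mu,q}K)$.

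The existence of both ellipsoids is guaranteed by Theorem \ref{t-2}. For $q=\infty$ it is guaranteed by the L\"owner-ellipsoid identification just before the statement, which also shows that the constraint $\widetilde{K}\subseteq E$ is equivalent to $\widehat{V}_{\mu,-\infty}(K,E)\ge1$. I expect the only delicate step to be this $q=\infty$ endpoint: the constraint in Problem $\widetilde{M}_{\mu,\infty}$ is phrased through the support of $\widetilde{V}_{\mu,K}$ and the convex hull $\widetilde{K}$. I would check directly that $\rho_E(u)\ge\rho_K(u)$ for all $u\in\mathrm{supp}(\widetilde{V}_{\mu,K})$ is equivalent to $r_K(u)\in E$ for those $u$, and hence, by convexity of $E$, to $\widetilde{K}\subseteq E$.
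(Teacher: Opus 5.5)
Your proposal is correct. The paper gives no argument of its own: it simply transfers \cite[Theorem 5.2]{YLW} by replacing $\widetilde{V}_K$ with $\widetilde{V}_{\mu,K}$, and your route (monotonicity of $p\mapsto\widehat{V}_{\mu,-p}(K,L)$ via Jensen/H\"older on the normalized measure, so that $\widetilde{E}_{\mu,q}K$ is admissible for Problem $\widetilde{M}_{\mu,p}$) is the standard argument behind that result. Your explicit check at $q=\infty$, that $\widehat{V}_{\mu,-\infty}(K,E)\ge 1$ is equivalent to $\widetilde{K}\subseteq E$, supplies a detail the paper leaves implicit.
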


It was shown in  \cite[Theorem 5.4]{YLW} that the volume of the dual $L_p$ John ellipsoid $\widetilde{E}_pK$ is greater than that of $K$. Imposing a homogeneous condition on the density $g$, we get the following theorem. 

\begin{thm}
Let $K\in
\mathcal {S}_{o}^n$ and $0<p<\infty$. If  $\sigma_{n-1}(\emph{supp}(g\circ r_K))>0$ and $g$ is  $s$-homogeneous with $s>-n$. Then
	\begin{equation}\label{vol1}
		\mu(\widetilde{E}_{\mu, p}K)\geq \mu(K),
	\end{equation}
	with equality if and only if $K$ and $\widetilde{E}_{\mu, p}K$ are dilates.
\end{thm}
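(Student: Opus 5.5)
The plan is to reduce \eqref{vol1} to a power-mean (Jensen) inequality for the probability measure obtained by normalizing $\widetilde{V}_{\mu,K}$. The homogeneity of $g$ is exactly what makes both $\mu(\cdot)$ and $\widetilde V_{\mu,K}$ integrals of the same power of the radial function.

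\emph{Step 1: rewrite $\mu$ and $\widetilde V_{\mu,K}$.} Since $g(ru)=r^s g(u)$ for $r>0$ and $s>-n$, the polar-coordinate formula from the proof of Theorem \ref{m1} gives, for every star body $L$,
\begin{equation*}
\mu(L)=\int_{S^{n-1}}\int_0^{\rho_L(u)}r^{n+s-1}g(u)\,dr\,d\sigma_{n-1}(u)=\frac{1}{n+s}\int_{S^{n-1}}\rho_L(u)^{n+s}g(u)\,d\sigma_{n-1}(u).
\end{equation*}
Similarly, $g(r_K(u))=\rho_K(u)^s g(u)$, so by \eqref{e-7}
\begin{equation*}
d\widetilde V_{\mu,K}=\tfrac1n\rho_K^{n+s}g\,d\sigma_{n-1},\qquad |\widetilde V_{\mu,K}|=\tfrac{n+s}{n}\mu(K).
\end{equation*}
The hypothesis $\sigma_{n-1}(\mathrm{supp}(g\circ r_K))>0$ ensures $|\widetilde V_{\mu,K}|>0$. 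Hence $d\nu=d\widetilde V_{\mu,K}/|\widetilde V_{\mu,K}|$ is a probability measure on $S^{n-1}$.

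\emph{Step 2: power means.} Put $E=\widetilde E_{\mu,p}K$, which exists and is unique by Theorem \ref{t-2}, and set $f=\rho_E/\rho_K>0$. The constraint of Problem $\widetilde M_{\mu,p}$ reads $\int f^{-p}\,d\nu\le 1$. Since $-p<0<n+s$, the power-mean inequality (Jensen applied to the convex function $t\mapsto t^{-(n+s)/p}$ on $(0,\infty)$) gives
\begin{equation*}
1\le\Big(\int f^{-p}d\nu\Big)^{-1/p}\le\Big(\int f^{n+s}d\nu\Big)^{1/(n+s)}.
\end{equation*}
Therefore $\int f^{n+s}\,d\nu\ge1$. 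Multiplying by $|\widetilde V_{\mu,K}|$ and using Step 1, this is
\begin{equation*}
\int_{S^{n-1}}\rho_E^{n+s}g\,d\sigma_{n-1}\ge\int_{S^{n-1}}\rho_K^{n+s}g\,d\sigma_{n-1},
\end{equation*}
that is, $\mu(E)\ge\mu(K)$.

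\emph{Step 3: equality.} If $\mu(E)=\mu(K)$, both inequalities in Step 2 are equalities. Equality in the power-mean inequality forces $f$ to be $\nu$-a.e.\ constant. Since $\rho_E$ and $\rho_K$ are continuous, $f$ is then constant, say $c$, on $\mathrm{supp}\,\nu$; the first equality gives $c=1$. Conversely, if $E=cK$, then $\int f^{-p}\,d\nu=c^{-p}$. At the optimum the constraint is active (by \eqref{cxx}), so $c=1$ and equality in \eqref{vol1} holds trivially.

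The main obstacle is precisely this equality case. The argument only yields $\rho_E=c\rho_K$ on $\mathrm{supp}(\widetilde V_{\mu,K})=\mathrm{supp}(g\circ r_K)$, not on all of $S^{n-1}$. If $g$ vanishes on an open cone, the values of $\rho_K$ there are invisible to both $\mu$ and the constraint. I would therefore state the equality condition as ``$K$ and $\widetilde E_{\mu,p}K$ are dilates on $\mathrm{supp}(\widetilde V_{\mu,K})$'', or add the assumption that $g>0$ $\sigma_{n-1}$-a.e.\ on $S^{n-1}$ so that the support is the whole sphere and genuine dilation follows.
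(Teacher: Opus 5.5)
Your proof is correct and is essentially the paper's argument. The paper also proves $|\widetilde V_{\mu,K}|=\frac{n+s}{n}\mu(K)$ by polar coordinates and uses that the constraint is active; it then applies H\"older's inequality with respect to $g\,d\sigma_{n-1}$ to get $\widetilde V_{\mu,-p}(K,E)\ge\frac{n+s}{n}\mu(K)^{\frac{n+p+s}{n+s}}\mu(E)^{-\frac{p}{n+s}}$, which is your power-mean inequality for $f=\rho_E/\rho_K$ under $\nu$ written in unnormalized form.

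Your caveat on the equality case is well founded, because the paper's H\"older equality condition likewise only gives $\rho_E\propto\rho_K$ $\,g\,d\sigma_{n-1}$-a.e. For a concrete failure, take $g$ to be the ($0$-homogeneous) indicator of the complement of the cone over small caps around $\pm e_1,\dots,\pm e_n$; then $\widetilde E_{\mu,p}B^n=B^n$ by symmetry and uniqueness, and altering $\rho_{B^n}$ inside those caps gives equality without dilation. So the paper's ``dilates'' conclusion needs $g>0$ a.e.\ on $S^{n-1}$, or must be read as dilates on $\mathrm{supp}(\widetilde V_{\mu,K})$.
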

\begin{proof}
We first claim \begin{equation}\label{cl}
	|\widetilde{V}_{\mu,K}|=\frac{n+s}{n}\mu(K),
\end{equation}
if $g$ is $s$-homogeneous. In fact, using polar coordinates gives
	\begin{align}\label{my1}
			\mu(K)&=\int_K
		g(x)dx\notag\\
		&=\int_{S^{n-1}}\Big(\int_0^{\rho_K(u)}r^{n-1}g(ru)dr\Big)d\sigma_{n-1}(u)\notag\\
		&=\int_{S^{n-1}}\Big(\int_0^{\rho_K(u)}r^{n+s-1}g(u)dr\Big)d\sigma_{n-1}(u)\notag\\
		&=\frac{1}{n+s}\int_{S^{n-1}}\rho_K(u)^{n+s}g(u)d\sigma_{n-1}(u).
	\end{align}
On the other hand, it follows from \eqref{KK} and \eqref{e-7} that
\begin{align}\label{my2}
	|\widetilde{V}_{\mu,K}|
		&=\int_{S^{n-1}} d\widetilde{V}_{\mu,K}\notag\\
	&=\frac{1}{n}\int_{S^{n-1}}\rho_K(u)^{n}g(\rho_K(u)u)d\sigma_{n-1}(u)\notag\\
	&=\frac{1}{n}\int_{S^{n-1}}\rho_K(u)^{n+s}g(u)d\sigma_{n-1}(u).
\end{align}
Therefore, the claim \eqref{cl} immediately follows from \eqref{my1} and \eqref{my2}.

Moreover, it follows from \eqref{eyh} that $\widehat{V}_{\mu, -p}(K, \widetilde{E}_{\mu, p}K)= 1$. Combining this with  \eqref{Kp}, \eqref{KK}, and \eqref{cl}, we obtain
\begin{equation}\label{mp2}
	\widetilde{V}_{\mu, -p}(K, \widetilde{E}_{\mu, p}K)= \widetilde{V}_{\mu,-p}(K,K)=|\widetilde{V}_{\mu,K}|=\frac{n+s}{n}\mu(K).
\end{equation}
By \eqref{LMV}, the H\"{o}lder inequality, and \eqref{my1}, we have
\begin{align*}
&\widetilde{V}_{\mu,-p}(K,\widetilde{E}_{\mu, p}K)\nonumber \\
	&=\frac{1}{n}\int_{S^{n-1}}\rho_K(u)^{n+p}\rho_{\widetilde{E}_{\mu, p}K}(u)^{-p}g(\rho_K(u)u)d\sigma_{n-1}(u) \nonumber \\
	&=\frac{n+s}{n}\cdot\frac{1}{n+s}\int_{S^{n-1}}\rho_K(u)^{n+p+s}\rho_{\widetilde{E}_{\mu, p}K}(u)^{-p}g(u)d\sigma_{n-1}(u) \nonumber \\
	&\geq \frac{n+s}{n} \left(\frac{1}{n+s}\int_{S^{n-1}}\rho_K(u)^{n+s}g(u)d\sigma_{n-1}(u)\right)^{\frac{n+p+s}{n+s}} \cdot \left(\frac{1}{n+s}\int_{S^{n-1}}\rho_{\widetilde{E}_{\mu, p}K}(u)^{n+s}g(u)d\sigma_{n-1}(u)\right)^{\frac{-p}{n+s}} \nonumber \\
	&=\frac{n+s}{n}	\mu(K)^{\frac{n+p+s}{n+s}} \mu(\widetilde{E}_{\mu, p}K)^{\frac{-p}{n+s}}.
\end{align*}
Together with \eqref{mp2}, we further have 
\begin{equation*}
	\frac{n+s}{n}\mu(K)\geq \frac{n+s}{n}	\mu(K)^{\frac{n+p+s}{n+s}} \mu(\widetilde{E}_{\mu, p}K)^{\frac{-p}{n+s}}.
\end{equation*}
Therefore, the desired inequality \eqref{vol1} together with its equality conditions follows. 
\end{proof}

\begin{rem}
An important special case of the measure $\mu$ is the Gaussian measure $\gamma_n$ on $\mathbb{R}^n$, defined by the density $g(x)=\frac{1}{(\sqrt{2\pi } )^n}e^{-|x|^2/2}$. In particular, \eqref{LMV}  leads to the following variational formula:
\begin{align*}\label{gau}
	\widetilde{V}_{\gamma_n,-p}(K,L)
	&=-\frac pn\lim_{\varepsilon\rightarrow
		0^+}\frac{\gamma_n(K\widetilde{+}_{-p}\varepsilon\cdot
		L)-\gamma_n(K)}{\varepsilon}\nonumber\\
	&=\frac{1}{n(\sqrt{2\pi } )^n}\int_{S^{n-1}}\rho_K(u)^{n+p}\rho_L(u)^{-p}e^{-\rho_K^2(u)/2}d\sigma_{n-1}(u).
\end{align*}
Note that the corresponding variational formula for the Minkowski–Firey $L_p$-combination \eqref{lp} was established by Huang, Xi, and Zhao \cite{YXZ} for $p=1$ and by Liu \cite{Lj} for $p \geq 1$.  The ellipsoid $\widetilde E_{\gamma_n, p}K$, which solves Problem $\widetilde{M}_{\gamma_n, p}$,
is called the dual $L_p$ John ellipsoid of $K$ for the Gaussian measure $\gamma_n$. Furthermore, Theorem \ref{t-2} implies that  $\widetilde E_{\gamma_n, p}K$ is a solution to Problem ${\widetilde{M}_{\gamma_n, p}}$ for $0<p<\infty$ if and only if it satisfies 
\begin{equation*}\label{yh2}
	|\widetilde{V}_{\gamma_n,K}|\rho_{\widetilde E_{\gamma_n, p}^{*}K}(x)^{-2}=\int_{S^{n-1}}|x \cdot u|^2\rho_{\widetilde E_{\gamma_n, p}K}(u)^{2}\Big(\dfrac{\rho_{\widetilde E_{\gamma_n, p}K}(u)}{\rho_K(u)}\Big)^{-p} d\widetilde{V}_{\gamma_n,K}(u),
\end{equation*}
for all $x \in \mathbb{R}^n\backslash \{o\}$.	
	\end{rem}

\section{The $L_p$ L\"{o}wner inclusion}\label{5}

The affine nature for $\widetilde E_{\mu, p} K$ is as follows. 

\begin{lem}\label{fs}
If $K\in
\mathcal {S}_{o}^n$ and  $0<p\le\infty$, then for $\phi \in
	\emph{GL}(n)$,
	\begin{equation}\label{Ea}
		\widetilde E_{\mu, p}\phi K=\phi  \widetilde E_{\mu^{\phi}, p}K.
	\end{equation}
\end{lem}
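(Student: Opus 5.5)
Since Problem $\widetilde M_{\mu,p}$ has a unique solution (Theorem \ref{t-2}), it suffices to show that the map $E\mapsto \phi E$ is a bijection of $\mathcal{E}^n_e$ that carries the feasible set of Problem $\widetilde M_{\mu^{\phi},p}$ for $K$ onto the feasible set of Problem $\widetilde M_{\mu,p}$ for $\phi K$. It also multiplies every volume by the same factor $|\det\phi|$. Once this is established, $\phi$ maps minimizers to minimizers, and uniqueness yields \eqref{Ea}.

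\emph{Step 1 (constraints correspond).} In the reformulation \eqref{yh}, the constraint reads $\widehat V_{\mu,-p}(K,E)\ge 1$. For $0<p<\infty$, \eqref{aff6} gives, for any $E\in\mathcal{E}^n_e$,
\[
\widehat V_{\mu,-p}(\phi K,\phi E)=\widehat V_{\mu^{\phi},-p}(K,E).
\]
Hence $E$ is feasible for the $\mu^\phi$-problem of $K$ if and only if $\phi E$ is feasible for the $\mu$-problem of $\phi K$. For $p=\infty$ the same conclusion follows from \eqref{aff7}. Here $\phi E$ ranges over all of $\mathcal{E}^n_e$ as $E$ does, since $\phi$ is invertible and linear images of origin-symmetric ellipsoids are origin-symmetric ellipsoids.

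\emph{Step 2 (objective scales uniformly).} We have $V(\phi E)=|\det\phi|V(E)$. So if $E_0=\widetilde E_{\mu^\phi,p}K$ and $E$ is any feasible ellipsoid for the $\mu$-problem of $\phi K$, then $\phi^{-1}E$ is feasible for the $\mu^\phi$-problem of $K$. This gives $V(\phi^{-1}E)\ge V(E_0)$, that is, $V(E)\ge V(\phi E_0)$. Together with Step 1, $\phi E_0$ is feasible and minimal, and uniqueness gives $\widetilde E_{\mu,p}\phi K=\phi E_0$.

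\emph{Main obstacle.} The delicate point is the hypotheses. Existence and uniqueness from Theorem \ref{t-2} require $\sigma_{n-1}(\mathrm{supp}(g\circ r_{\phi K}))>0$ for $\mu$ and $\phi K$, and the analogous condition with density $g\circ\phi$ for $K$; these two conditions must be shown equivalent. Note $g\circ\phi\circ r_K(u)=g(\rho_K(u)\phi u)=g(r_{\phi K}(\overline{\phi u}))$. The map $u\mapsto\overline{\phi u}$ is a diffeomorphism of $S^{n-1}$, so it preserves $\sigma_{n-1}$-null sets, and positivity of the support measure transfers. The Lebesgue-set assumption on $\partial(\phi K)$ transfers in the same way, since $\phi$ is bi-Lipschitz and preserves $\mathcal H^{n-1}$-null sets.

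A second subtlety is $p=\infty$, where \eqref{aff7} was obtained only as a limit. If needed, I would verify it directly from \eqref{wuqiong}: $\mathrm{supp}(\widetilde V_{\mu,\phi K})$ is the image of $\mathrm{supp}(\widetilde V_{\mu^\phi,K})$ under $u\mapsto\overline{\phi u}$, and $\rho_{\phi E}(\overline{\phi u})/\rho_{\phi K}(\overline{\phi u})=\rho_E(u)/\rho_K(u)$ by \eqref{rsl}.
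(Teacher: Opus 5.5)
Your proof is correct and follows the paper's argument. Both use \eqref{aff6} and \eqref{aff7} to show that $E\mapsto\phi E$ carries ellipsoids feasible for the $\mu^{\phi}$-problem of $K$ to ellipsoids feasible for the $\mu$-problem of $\phi K$ (and back), compare volumes through the common factor $|\det\phi|$, and conclude by uniqueness. Your extra checks that the support and Lebesgue-set hypotheses transfer between $(\mu,\phi K)$ and $(\mu^{\phi},K)$, and your direct verification of the $p=\infty$ case, are points the paper leaves implicit.
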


\begin{proof}
		By  \eqref{aff6} and \eqref{aff7}, we have, for $\phi\in \textrm{GL}(n)$,
	\begin{equation*}
		1=\widehat{V}_{\mu, -p}(\phi K,\widetilde E_{\mu, p}\phi
		K)=\widehat{V}_{\mu^{\phi}, -p}(K, \phi^{-1}\widetilde E_{\mu, p}\phi K).
	\end{equation*}
	Then, it follows from \eqref{eyh} that
	\begin{equation}\label{lee}
		V(\phi^{-1}\widetilde E_{\mu, p}\phi K)\ge V( \widetilde E_{\mu^{\phi}, p}K).
	\end{equation}
	Using \eqref{aff6} and \eqref{aff7} again, we also get
	\begin{equation*}\label{eq}
		1=\widehat{V}_{\mu^{\phi}, -p}(K, \widetilde E_{\mu^{\phi}, p}
		K)=\widehat{V}_{\mu, -p}(\phi K, \phi \widetilde E_{\mu^{\phi}, p} K),
	\end{equation*}
and by \eqref{eyh} again,
	\begin{equation}\label{ee2}
		V(\phi \widetilde E_{\mu^{\phi}, p}K)\ge V(\widetilde E_{\mu, p}\phi K).
	\end{equation}
	Combining \eqref{lee} with \eqref{ee2} yields
	\begin{equation*}\label{yi}
		V(\phi \widetilde E_{\mu^{\phi}, p}K)= V(\widetilde E_{\mu, p}\phi K).
	\end{equation*}
Now, the uniqueness of the dual $L_p$ John ellipsoid for the
measure $\mu$ immediately yields the  the desired result \eqref{Ea}.	
\end{proof}

For $K\in
\mathcal {S}_{o}^n$ and $1\leq p\leq\infty$, the weighted $L_p$ centroid body  $\Gamma_{\mu,p}K$ is a convex body in $\mathbb{R}^n$ whose support function at $x\in \mathbb R^n$ is given by
\begin{align}\label{as}
	h_{\Gamma_{\mu,p}K}(x)^p&=\frac{1}{|\widetilde{V}_{\mu,K}|}\int_{S^{n-1}} |x\cdot u|^{p} \rho_K(u)^{n+p}g(\rho_K(u)u)d\sigma_{n-1}(u)\notag\\
	&=\frac{1}{|\widetilde{V}_{\mu,K}|}\int_{S^{n-1}} |x\cdot u|^{p} d \widetilde{S}_{\mu ,p}(K,u),
\end{align}
or, by \eqref{e-7}, 
\begin{equation*}
	h_{\Gamma_{\mu,p}K}(x)^p=\frac{n}{|\widetilde{V}_{\mu,K}|}\int_{S^{n-1}} \Big(|x \cdot u|\rho_{K}(u)\Big)^{p} d\widetilde{V}_{\mu,K}(u),
\end{equation*}
and for $p=\infty$,
\begin{align}\label{e-11}
h_{\Gamma_{\mu,\infty}K}(x)\nonumber
&=\lim_{p \to \infty}h_{\Gamma_{\mu,p}K}(x)\nonumber\\
&=\lim_{p \to \infty}\left(\frac{n}{|\widetilde{V}_{\mu,K}|}\int_{S^{n-1}} \Big(|x \cdot u|\rho_{K}(u)\Big)^{p} d\widetilde{V}_{\mu,K}(u)\right)^\frac{1}{p}\\
&=\max_{u\in \mathrm{supp}(\widetilde{V}_{\mu,K})}( |x \cdot u|\rho_{K}(u)).\nonumber
\end{align}
The definition above immediately implies that 
\begin{equation}\label{e-10}
\Gamma_{\mu,\infty}K=\mathrm{conv}\big\{\widetilde{K}\cup (-\widetilde{K})\big\},
\end{equation}
where $\widetilde{K}$ is defined in \eqref{e-9}. 
 Moreover, it follows from \eqref{yh1} and \eqref{rh} that 
$$\widetilde{E}_{\mu, 2}K=\Gamma_{\mu, 2} K.$$
When $\mu$ is the Lebesgue measure on $\mathbb{R}^n$,  $\Gamma_{\mu,p}K$ reduces to the classical $L_p$-centroid body $\Gamma_{p}
K$ introduced in \cite{LZ97}. 

Notice that another form of the weighted $L_p$ centroid body $\overline{\Gamma}_{\mu,p}K$ was defined by Wu and Bu \cite{Wu3}: for $K\in
\mathcal {S}_{o}^n$ and a measure $\mu$  on $\mathbb{R}^n$, the support function of $\overline{\Gamma}_{\mu,p}K$ at $x\in \mathbb R^n$ is given by
\begin{equation}\label{wu}
	h_{\overline{\Gamma}_{\mu,p}K}(x)^p=\frac{1}{\mu(K)}\int_{\mathbb R^n} |x\cdot y|^{p} d\mu(y), \ \ \ p\ge 1.
\end{equation}
It is easy to see that the definitions of \eqref{as} and \eqref{wu} are different by using polar coordinates.

\begin{lem}
Let $K\in \mathcal {S}^n_{o}$ and $1\leq p\leq \infty$. Then for $\phi \in
\emph{GL}(n)$,
\begin{equation}\label{zxt}
	\Gamma_{\mu,p}\phi K=\phi \Gamma_{\mu^\phi,p}K.
\end{equation}
\end{lem}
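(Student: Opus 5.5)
We compare support functions: by \eqref{9}, $h_{\phi\Gamma_{\mu^\phi,p}K}(x)=h_{\Gamma_{\mu^\phi,p}K}(\phi^t x)$, so \eqref{zxt} is equivalent to
\[
h_{\Gamma_{\mu,p}\phi K}(x)=h_{\Gamma_{\mu^\phi,p}K}(\phi^t x)\qquad\text{for all }x\in\mathbb R^n .
\]
We prove this first for $1\le p<\infty$ and then let $p\to\infty$.

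\emph{Finite $p$.} By \eqref{as},
\[
h_{\Gamma_{\mu,p}\phi K}(x)^p=\frac{1}{|\widetilde V_{\mu,\phi K}|}\int_{S^{n-1}}|x\cdot u|^p\,d\widetilde S_{\mu,p}(\phi K,u).
\]
The numerator and the denominator are handled separately.

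For the numerator, apply Lemma \ref{pp1}, i.e. $d\widetilde S_{\mu,p}(\phi K,\cdot)=|\det\phi|\,d\phi_{p^\vdash}\widetilde S_{\mu^\phi,p}(K,\cdot)$. Then use the definition \eqref{p1} of the dual $L_p$ image with $f(u)=|x\cdot u|^p$. This gives
\[
\int_{S^{n-1}}|x\cdot u|^p\,d\widetilde S_{\mu,p}(\phi K,u)
=|\det\phi|\int_{S^{n-1}}|\phi u|^p\,|x\cdot\overline{\phi u}|^p\,d\widetilde S_{\mu^\phi,p}(K,u).
\]
Since $|\phi u|^p|x\cdot\overline{\phi u}|^p=|x\cdot\phi u|^p=|\phi^tx\cdot u|^p$, the numerator equals $|\det\phi|\int_{S^{n-1}}|\phi^tx\cdot u|^p\,d\widetilde S_{\mu^\phi,p}(K,u)$.

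For the denominator, use \eqref{KK}: $|\widetilde V_{\mu,\phi K}|=\widetilde V_{\mu,-p}(\phi K,\phi K)$. The affine identity \eqref{aff1} then gives
\[
|\widetilde V_{\mu,\phi K}|=|\det\phi|\,\widetilde V_{\mu^\phi,-p}(K,K)=|\det\phi|\,|\widetilde V_{\mu^\phi,K}|.
\]
The factors $|\det\phi|$ cancel, and by \eqref{as} again the quotient is $h_{\Gamma_{\mu^\phi,p}K}(\phi^tx)^p$. This proves the claim for $1\le p<\infty$.

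One technical point needs checking: the objects for $\mu^\phi$ must be well defined, i.e. $\partial K$ should lie (up to an $\mathcal H^{n-1}$-null set) in the Lebesgue set of $g\circ\phi$. This follows because $\phi$ is bi-Lipschitz, so it maps Lebesgue points to Lebesgue points and $\mathcal H^{n-1}$-null sets to $\mathcal H^{n-1}$-null sets. Lemma \ref{pp1} and \eqref{aff1} were already used in the paper under this implicit assumption.

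\emph{The case $p=\infty$.} Let $p\to\infty$ in the identity just proved; by \eqref{e-11} both sides converge to the corresponding $p=\infty$ support functions. Alternatively, argue directly with the max formula in \eqref{e-11}. We expect the direct route to be the main obstacle, because it requires showing that the supports transform correctly: $\mathrm{supp}(\widetilde V_{\mu,\phi K})=\overline{\phi(\mathrm{supp}\,\widetilde V_{\mu^\phi,K})}$ (normalized). This holds because both measures have densities relative to $\sigma_{n-1}$ with $g(r_{\phi K}(\overline{\phi u}))=g(\phi r_K(u))$, and $u\mapsto\overline{\phi u}$ is a diffeomorphism of $S^{n-1}$. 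Together with $\rho_{\phi K}(\overline{\phi u})\,\overline{\phi u}=\phi\,r_K(u)$, this gives
\[
\max_{v}|x\cdot v|\rho_{\phi K}(v)=\max_{u}|x\cdot\phi r_K(u)|=\max_u|\phi^tx\cdot u|\rho_K(u),
\]
with the maxima taken over the respective supports, which is the required identity.
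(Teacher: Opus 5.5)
Your proof is correct and follows the paper's own argument essentially step for step. You compare support functions via \eqref{9}, rewrite the integral using Lemma \ref{pp1} and \eqref{p1}, rewrite the normalizing factor $|\widetilde V_{\mu,\phi K}|$ using \eqref{KK} and \eqref{aff1}, cancel $|\det\phi|$, and then pass to $p\to\infty$ through the limit definition \eqref{e-11}. Your remark that $\partial K$ lies in the Lebesgue set of $g\circ\phi$, and your direct support-transformation check for $p=\infty$, are both correct supplements to what the paper leaves implicit.
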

\begin{proof}
	By \eqref{as}, \eqref{KK}, \eqref{aff1}, \eqref{aff0}, \eqref{p1}, and \eqref{9}, we have
	\begin{align}\label{pw}
		h_{\Gamma_{\mu,p}\phi K}(x)
				&=\left(\frac{1}{|\widetilde{V}_{\mu,\phi K}|}\int_{S^{n-1}}\left | x \cdot u\right |^{p}d \widetilde{S}_{\mu ,p}(\phi K,u)\right)^\frac{1}{p}\nonumber\\
			&=\left(\dfrac{1}{ |\widetilde{V}_{\mu^\phi,K}|}\int_{S^{n-1}}\left |x \cdot u\right |^{p} d\phi_{p^\vdash} \widetilde{S}_{\mu^{\phi} , p}(K,u)\right)^\frac{1}{p}\nonumber\\
		&=\left(\dfrac{1}{|\widetilde{V}_{\mu^\phi ,K}|}\int_{S^{n-1}}\left | x \cdot \overline{\phi u}\right |^{p} |\phi u|^p d \widetilde{S}_{\mu^{\phi} , p}(K,u)\right)^\frac{1}{p}\nonumber\\
		&=\left(\dfrac{1}{|\widetilde{V}_{\mu^\phi ,K}|}\int_{S^{n-1}}\left | \phi^t x\cdot u\right |^{p}d \widetilde{S}_{\mu^{\phi} , p}(K,u)\right)^\frac{1}{p}\nonumber\\
		&=	h_{\Gamma_{\mu^\phi ,p}K}(\phi^t x)= h_{\phi \Gamma_{\mu^\phi ,p}K}(x),
	\end{align}
for any $x\in \mathbb R^n$ and $\phi \in
\mathrm{GL}(n)$. 

For $p=\infty$, by \eqref{e-11} and \eqref{pw}, we also have
\begin{align*}
h_{\Gamma_{\mu,\infty}\phi K}(x)&=\lim_{p \to \infty}\left(\frac{n}{|\widetilde{V}_{\mu,\phi K}|}\int_{S^{n-1}} \Big(|x \cdot u|\rho_{\phi K}(u)\Big)^{p} d\widetilde{V}_{\mu,\phi K}(u)\right)^\frac{1}{p}\\
&=\lim_{p \to \infty}\left(\frac{n}{|\widetilde{V}_{\mu^\phi,K}|}\int_{S^{n-1}} \Big(|\phi^tx \cdot u|\rho_{K}(u)\Big)^{p} d\widetilde{V}_{\mu^\phi,K}(u)\right)^\frac{1}{p}\\
&=h_{\Gamma_{\mu^\phi,\infty} K}(\phi^tx)\\
&=h_{\phi\Gamma_{\mu^\phi,\infty} K}(x),	
\end{align*}
for any $x\in \mathbb R^n$ and $\phi \in
\mathrm{GL}(n)$. 
	\end{proof}

As the dual of the John inclusion, the L\"{o}wner inclusion  asserts  that if $K$ is an origin-symmetric convex
body in $\mathbb R^n$, then
\begin{equation}\label{LK}
	\frac{1}{\sqrt{n}}LK\subseteq K\subseteq LK
\end{equation}
where  $LK$ denotes  the  L\"{o}wner ellipsoid of $K$. Finally, we  establish the $L_p$ L\"{o}wner inclusion for the new dual $L_p$ John ellipsoid.

\begin{lem}\label{tl}
If $1\leq p\leq \infty$ and $K\in \mathcal {S}^n_{o}$ such that $\widetilde{E}_{\mu, p}K=B^n$, then  
	$$ \widetilde{E}_{\mu, p}K\subseteq \Gamma_{\mu,p}K\subseteq n^{\frac{1}{p}-\frac{1}{2}}\widetilde{E}_{\mu, p}K, \quad 1\leq p\leq 2;$$	$$n^{\frac{1}{p}-\frac{1}{2}}\widetilde{E}_{\mu, p}K \subseteq	\Gamma_{\mu,p}K \subseteq\widetilde{E}_{\mu, p}K ,\quad  2\leq p\leq \infty.$$
	\end{lem}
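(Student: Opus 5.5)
Because $\widetilde E_{\mu,p}K=B^n$, the whole statement reduces to comparing $h_{\Gamma_{\mu,p}K}(x)$ with $|x|$ for $x\in S^{n-1}$. I will first treat $1\le p<\infty$ using the characterization of Theorem \ref{t-2}, and then obtain $p=\infty$ as a limit.

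Fix $1\le p<\infty$. The ellipsoid $B^n$ satisfies $\widehat V_{\mu,-p}(K,B^n)=1$, so by \eqref{Kp} and \eqref{dcm1}
$$\frac1n\int_{S^{n-1}} d\widetilde S_{\mu,p}(K,u)=\widetilde V_{\mu,-p}(K,B^n)=|\widetilde V_{\mu,K}|.$$
Next I use \eqref{yh1}, equivalently \eqref{yh11} with $E=E^*=B^n$ and $\rho_E\equiv 1$. This gives, for every $x$,
$$|\widetilde V_{\mu,K}|\,|x|^2=\int_{S^{n-1}}|x\cdot u|^2\, d\widetilde S_{\mu,p}(K,u).$$
Let $\nu$ be the probability measure $\widetilde S_{\mu,p}(K,\cdot)/(n|\widetilde V_{\mu,K}|)$. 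The two displays say that $\int_{S^{n-1}}|x\cdot u|^2\,d\nu=|x|^2/n$. By \eqref{as}, $h_{\Gamma_{\mu,p}K}(x)^p=n\int_{S^{n-1}}|x\cdot u|^p\,d\nu$.

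Take $x\in S^{n-1}$ and set $f(u)=|x\cdot u|$. Then $0\le f\le 1$, and $\|f\|_{L^2(\nu)}=n^{-1/2}$.

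For $1\le p\le 2$:
\begin{itemize}
\item Since $f\le1$, we have $f^2\le f^p$. Hence $\int f^p\,d\nu\ge 1/n$, i.e.\ $h_{\Gamma_{\mu,p}K}(x)\ge 1$, which gives $B^n\subseteq\Gamma_{\mu,p}K$.
\item Since $\nu$ is a probability measure, Hölder (monotonicity of $L^q(\nu)$ norms) gives $\|f\|_p\le\|f\|_2=n^{-1/2}$. Hence $h_{\Gamma_{\mu,p}K}(x)\le n^{1/p}n^{-1/2}$, which gives $\Gamma_{\mu,p}K\subseteq n^{1/p-1/2}B^n$.
\end{itemize}

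For $2\le p<\infty$ the same two inequalities reverse:
\begin{itemize}
\item Since $f\le1$, now $f^p\le f^2$, so $h_{\Gamma_{\mu,p}K}(x)\le 1$.
\item Hölder gives $\|f\|_p\ge\|f\|_2$, so $h_{\Gamma_{\mu,p}K}(x)\ge n^{1/p-1/2}$.
\end{itemize}
Because both bodies are origin-symmetric convex bodies, these support-function comparisons are exactly the claimed inclusions.

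The main obstacle is $p=\infty$, since Theorem \ref{t-2} covers only $0<p<\infty$. Here $B^n=\widetilde E_{\mu,\infty}K$ is the Löwner ellipsoid of $\Gamma_{\mu,\infty}K=\mathrm{conv}\{\widetilde K\cup(-\widetilde K)\}$, by the discussion after Theorem \ref{t-1} together with \eqref{e-10}. The required inclusion $n^{-1/2}B^n\subseteq\Gamma_{\mu,\infty}K\subseteq B^n$ is then precisely the classical Löwner inclusion \eqref{LK} applied to this origin-symmetric convex body, so I will cite \eqref{LK} rather than take a limit in $p$.
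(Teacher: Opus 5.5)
Your proof is correct and follows the paper's argument essentially step for step. You use the identity $|\widetilde V_{\mu,K}|\,|x|^2=\int_{S^{n-1}}|x\cdot u|^2\,d\widetilde S_{\mu,p}(K,u)$ from \eqref{yh1} and the normalization $\frac1n\int_{S^{n-1}}d\widetilde S_{\mu,p}(K,u)=|\widetilde V_{\mu,K}|$. You then compare $|x\cdot u|^p$ with $|x\cdot u|^2$ pointwise for one inclusion, use Jensen/monotonicity of $L^q$ norms for the other, and invoke the classical L\"owner inclusion \eqref{LK} for $p=\infty$; normalizing to the probability measure $\nu$ is just a tidier packaging of the paper's Jensen step.
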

\begin{proof}
When $1\leq p\leq 2$, by the assumption of $\widetilde{E}_{\mu, p}K=B^n$, \eqref{yh1} and \eqref{dcm2}, we have 
\begin{equation}\label{bn}
	|\widetilde{V}_{\mu,K}|=\int_{S^{n-1}} |w\cdot u|^2d\widetilde{S}_{\mu,p}(K, u),
\end{equation}
for any  $w\in S^{n-1} $. Then, by \eqref{as},
	\begin{align}\label{pff1}
		h_{\Gamma_{\mu,p}K}(w) \nonumber 
		&=\left(\frac{1}{|\widetilde{V}_{\mu ,K}|}\int_{S^{n-1}}\left |w \cdot u\right |^{p} d \widetilde{S}_{\mu, p}(K,u)\right)^\frac{1}{p}\nonumber \\
		& \geq\left(\frac{1}{|\widetilde{V}_{\mu ,K}|}\int_{S^{n-1}}\left |w \cdot u\right |^{2} d \widetilde{S}_{\mu, p}(K,u)\right)^\frac{1}{p}\nonumber \\
		&= 1,
	\end{align}	
for each $w\in S^{n-1}$. Thus,  $\widetilde{E}_{\mu, p}K\subseteq \Gamma_{\mu,p}K$.

On the other hand, by \eqref{KK}, \eqref{Kp}, \eqref{eyh},  and \eqref{dcm1}, we have	
\begin{equation*}
		|\widetilde{V}_{\mu,K}|=\widetilde{V}_{\mu, -p}(K,K)=\widetilde{V}_{\mu, -p}(K,B^n)=\frac1n\int_{S^{n-1}} d \widetilde{S}_{\mu ,p}(K,u).
	\end{equation*}
Thus, by Jensen's inequality and \eqref{bn}, we further have 
	\begin{align}\label{pfff1}
		h_{\Gamma_{\mu,p}K}(w)  \nonumber 
		&= n^\frac{1}{p}\left(\frac{1}{n|\widetilde{V}_{\mu,K}|}\int_{S^{n-1}} |w\cdot u|^{p} d \widetilde{S}_{\mu ,p}(K,u)\right)^\frac{1}{p}\nonumber \\
			&\le n^\frac{1}{p}\left(\frac{1}{n|\widetilde{V}_{\mu,K}|}\int_{S^{n-1}} |w\cdot u|^{2} d \widetilde{S}_{\mu ,p}(K,u)\right)^\frac{1}{2}\nonumber \\
				&=n^{\frac{1}{p}-\frac{1}{2}},
	\end{align}	
for each $w\in S^{n-1}$. That is,  $\Gamma_{\mu,p}K	\subseteq n^{\frac{1}{p}-\frac{1}{2}}\widetilde{E}_{\mu, p}K$.
	
	When $2\leq p<\infty$, inequalities \eqref{pff1} and \eqref{pfff1} are reversed. In light of Problem  $\widetilde{M}_{\mu, \infty}$ and relation \eqref{e-10}, we find  that  $\widetilde E_{\mu, \infty}K$ is the L\"{o}wner ellipsoid of $\Gamma_{\mu,\infty}K=\mathrm{conv}\big\{\widetilde{K}\cup (-\widetilde{K})\big\}$. Consequently, the desired inclusion follows from the  L\"{o}wner inclusion \eqref{LK}.  
\end{proof}

The $L_p$ L\"{o}wner inclusion for the new dual $L_p$ John ellipsoid can be formulated as follows. In the case where $\mu$ is the Lebesgue measure on $\mathbb{R}^n$ and $K\in\mathcal {K}_{o}^n$, Theorem \ref{cc1-2} was established in \cite{YLW}.
\begin{thm}\label{cc1-2}
If  $1\leq p\leq \infty$ and $K\in \mathcal {S}^n_{o}$, then
	$$\widetilde{E}_{\mu, p}K\subseteq \Gamma_{\mu,p}K\subseteq n^{\frac{1}{p}-\frac{1}{2}}\widetilde{E}_{\mu, p}K, \quad 1\leq p\leq 2;$$
	$$
	n^{\frac{1}{p}-\frac{1}{2}}\widetilde{E}_{\mu, p}K \subseteq	\Gamma_{\mu,p}K \subseteq\widetilde{E}_{\mu, p}K ,\quad  2\leq p\leq \infty.
	$$
\end{thm}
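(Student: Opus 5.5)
The plan is to reduce Theorem \ref{cc1-2} to Lemma \ref{tl} using the affine behaviour of both bodies, namely Lemma \ref{fs} and \eqref{zxt}. Given $K\in\mathcal{S}^n_o$ and $1\le p\le\infty$, we have $\sigma_{n-1}(\mathrm{supp}(g\circ r_K))>0$, so $\widetilde{E}_{\mu,p}K$ exists and is an origin-symmetric ellipsoid. Write $\widetilde{E}_{\mu,p}K=\phi B^n$ for some $\phi\in\mathrm{GL}(n)$. Set $K'=\phi^{-1}K\in\mathcal{S}^n_o$ and $\nu=\mu^{\phi}$, the measure with density $g\circ\phi$.

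\emph{Step 1: normalize the ellipsoid.} Apply Lemma \ref{fs} to the measure $\nu$, the body $K'$ and the map $\phi^{-1}$. Since $\nu^{\phi^{-1}}$ has density $g\circ\phi\circ\phi^{-1}=g$, i.e. $\nu^{\phi^{-1}}=\mu$, this gives
$$\widetilde{E}_{\nu,p}K'=\widetilde{E}_{\nu,p}\phi^{-1}K=\phi^{-1}\widetilde{E}_{\mu,p}K=B^n.$$

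\emph{Step 2: check the hypotheses and apply Lemma \ref{tl}.} The nondegeneracy hypothesis transfers to $(\nu,K')$. Indeed, $r_{K'}(u)=\phi^{-1}r_K(\overline{\phi u})$, so $g\circ\phi\circ r_{K'}(u)=g(r_K(\overline{\phi u}))$. The map $u\mapsto\overline{\phi u}$ is a diffeomorphism of $S^{n-1}$, so it preserves sets of positive $\sigma_{n-1}$-measure. The Lebesgue-set assumption transfers likewise under the linear map. Lemma \ref{tl} then gives the stated inclusions between $B^n=\widetilde{E}_{\nu,p}K'$ and $\Gamma_{\nu,p}K'$.

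\emph{Step 3: transport back.} Apply $\phi$ to these inclusions. Linear maps preserve inclusions and commute with positive dilations. By \eqref{zxt} with measure $\nu$ and map $\phi^{-1}$, we have $\Gamma_{\nu,p}K'=\phi^{-1}\Gamma_{\mu,p}K$, hence $\phi\,\Gamma_{\nu,p}K'=\Gamma_{\mu,p}K$. Also $\phi B^n=\widetilde{E}_{\mu,p}K$. These give exactly the two chains in the theorem, including the endpoint $p=\infty$, which is covered by the $p=\infty$ cases of Lemma \ref{fs}, \eqref{zxt} and Lemma \ref{tl}.

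\emph{Main obstacle.} The delicate point is bookkeeping the measure under composition, i.e. verifying $(\mu^{\phi})^{\phi^{-1}}=\mu$ and using Lemma \ref{fs} and \eqref{zxt} in the direction written. Alternatively, one can apply them directly with $\phi^{-1}$ and $\mu$: then $\widetilde E_{\mu^{\phi^{-1}},p}$ would appear, so one must match the convention that Lemma \ref{fs} produces $\mu^\phi$ on the right. I will write out the substitution $x=\phi y$ explicitly to confirm the correct measure appears. Beyond that, the argument only needs the existence and uniqueness from Theorem \ref{t-2} (and its $p=\infty$ counterpart via the L\"owner ellipsoid), so that $\widetilde{E}_{\mu,p}K$ is well defined.
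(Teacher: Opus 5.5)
Your proposal is correct and is essentially the paper's proof: write $\widetilde E_{\mu,p}K=\phi B^n$, use Lemma \ref{fs} and \eqref{zxt} to get $\widetilde E_{\mu^\phi,p}\phi^{-1}K=B^n$ and $\Gamma_{\mu^\phi,p}\phi^{-1}K=\phi^{-1}\Gamma_{\mu,p}K$, apply Lemma \ref{tl}, and map back by $\phi$. Your check that the nondegeneracy and Lebesgue-set hypotheses transfer to $(\mu^\phi,\phi^{-1}K)$ is a detail the paper leaves implicit, but note that $\sigma_{n-1}(\mathrm{supp}(g\circ r_K))>0$ is a standing assumption needed for $\widetilde E_{\mu,p}K$ to exist, not a consequence of $K\in\mathcal S^n_o$; also, the instance of Lemma \ref{fs} you actually use in Step 1 has measure $\nu$, map $\phi^{-1}$ and body $K$ (not $K'$), whereas the paper takes measure $\mu$, map $\phi$ and body $\phi^{-1}K$, which gives $\widetilde E_{\mu,p}K=\phi\,\widetilde E_{\mu^\phi,p}\phi^{-1}K$ directly and avoids the $(\mu^\phi)^{\phi^{-1}}=\mu$ bookkeeping.
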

\begin{proof}
	Without loss
	of generality, we  assume that  $\widetilde{E}_{\mu, p}K=\phi B^n$ for some
	$\phi\in \textrm{GL}(n)$. It follows from  \eqref{Ea} that
	\begin{equation*}
		\widetilde{E}_{\mu^{\phi}, p}\phi^{-1}K=\phi^{-1}\widetilde{E}_{\mu, p}
		K=\phi^{-1}\phi B^n=B^n.
	\end{equation*}
	Applying Lemma \ref{tl} to $\widetilde{E}_{\mu^{\phi}, p}\phi^{-1}K$ yields
	$$\widetilde{E}_{\mu^{\phi}, p}\phi^{-1}K\subseteq  \Gamma_{\mu^{\phi}, p}\phi^{-1}K\subseteq
	n^{\frac{1}{p}-\frac{1}{2}}\widetilde{E}_{\mu^{\phi}, p}\phi^{-1}K,\quad
	1\leq p \leq 2;$$ $$n^{\frac{1}{p}-\frac{1}{2}}\widetilde{E}_{\mu^{\phi}, p}\phi^{-1}K\subseteq \Gamma_{\mu^{\phi}, p}\phi^{-1}K\subseteq \widetilde{E}_{\mu^{\phi}, p}\phi^{-1}K,\quad
	2 \leq p \leq \infty.$$
	Moreover, by \eqref{Ea} and
	\eqref{zxt}, we have
	\begin{equation*}
		\widetilde{E}_{\mu^{\phi}, p}\phi^{-1}K=\phi^{-1}\widetilde{E}_{\mu, p}
		K\quad \textrm{and}\quad\Gamma_{\mu^{\phi}, p}\phi^{-1}K =\phi^{-1}\Gamma_{\mu, p}
		K.
	\end{equation*}
	Consequently, we get
	$$\widetilde{E}_{\mu, p}K\subseteq \Gamma_{\mu,p}K\subseteq n^{\frac{1}{p}-\frac{1}{2}}\widetilde{E}_{\mu, p}K, \quad 1\leq p\leq 2;$$
	$$
	n^{\frac{1}{p}-\frac{1}{2}}\widetilde{E}_{\mu, p}K \subseteq	\Gamma_{\mu,p}K \subseteq\widetilde{E}_{\mu, p}K ,\quad  2\leq p\leq \infty.
	$$
\end{proof}


\begin{thebibliography}{16}

	\bibitem{AWHQ}
	 W. Ai and Q. Huang, \textit{The $L_p$ John ellipsoids for general measures}, Geom. Dedicata \textbf{217} (2023), Paper No.  17, 19 pp.
	
	\bibitem{Alonso-S-2}
D. Alonso-Guti\'{e}rrez and S. Brazitikos, \textit{Sections of
	convex bodies in John's and minimal surface area position}, Int. Math. Res. Not. (IMRN) \textbf{2023}  (2023), 243-297.	
	
	
	 
	\bibitem{Ball-91}
	K. Ball, \textit{Volume ratios and a reverse isoperimetric
		inequality}, J. London Math. Soc. \textbf{44} (1991), 351-359.

\bibitem{F-Barthe}
F. Barthe, \textit{On a reverse form of the Brascamp-Lieb inequality}, Invent. Math. \textbf{134} (1998), 335-361.

	\bibitem{Barthe}
	F. Barthe, \textit{An extremal property of the mean width of the
		simplex}, Math. Ann. \textbf{310} (1998), 685-693.
	


\bibitem{Barthe05}
F. Barthe, O. Guedon, S. Mendelson, and A. Naor,  \textit{A probabilistic approach to the geometry of the $l_p^n$
ball}, Ann. Probab. \textbf{33} (2005), 480-513. 
	
		
	
	\bibitem{BR}
	J. Bastero and M. Romance, \textit{Positions of convex bodies
		associated to extremal problems and isotropic measures}, Adv. Math.
	\textbf{184} (2004), 64-88.
	
	
	\bibitem{BLYZZ}
	K.J. B\"{o}r\"{o}czky, E. Lutwak, D. Yang,  G. Zhang, and Y. Zhao, \textit{The dual Minkowski problem for symmetric convex bodies}, Adv. Math. \textbf{356} (2019),
	106805.
	
	\bibitem{Firey}
	W.J. Firey, \textit{p-means of convex bodies}, Math. Scand.
	\textbf{10} (1962), 17-24.
	
	\bibitem{FLMZI}
	M. Fradelizi, D. Langharst, M. Madiman, and A. Zvavitch, \textit{Weighted Brunn-Minkowski theory
	I: on weighted surface area measures}, J. Math.Anal.Appl. \textbf{529} (2024), Paper No.  127519, 30 pp.

\bibitem{FLMZII}
 M. Fradelizi, D. Langharst, M. Madiman, and A. Zvavitch, \textit{Weighted Brunn-Minkowski theory
	II: on inequalities for mixed measures}, arXiv:2402.10314,  2023.
	
	
	
	
	\bibitem{Gardner}
	R.J. Gardner, Geometric Tomography, second edition, Cambridge
	University Press, New York, 2006.
	
	
	\bibitem{GWXY}
	R.J. Gardner, D. Hug, W. Weil, S. Xing, and D. Ye, \textit{General
		volumes in the Orlicz-Brunn-Minkowski theory and a related Minkowski
		problem I}, Calc. Var. Partial Differential Equations \textbf{58}
	(2019), Paper No. 12, 35 pp.
	
	
	\bibitem{GiaM}
	 A. Giannopoulos and V.D. Milman, \textit{Extremal problems and isotropic
		positions of convex bodies}, Israel J. Math. \textbf{117} (2000),
	29-60.
	
	
	\bibitem{Gia} 
	A. Giannopoulos and M. Papadimitrakis, \textit{Isotropic surface area measures}, Mathematika \textbf{46}
	(1999), 1-13.

\bibitem{GM} 	
M. Gromov and  V.D. Milman,  \textit{Generalization of the spherical isoperimetric inequality for uniformly
convex Banach Spaces}, Compos. Math. \textbf{62} (1987), 263-282. 	
	
	
	\bibitem{Gruber} 
	P.M. Gruber,  \textit{John and Loewner ellipsoids}, Discret. Comput. Geom. \textbf{46} (2011), 776-788.
	
	\bibitem{Henk} M. Henk, \textit{L\"{o}wner–John ellipsoids}, Doc. Math., Extra vol.: Optimization stories 2012, 95-106.
	
	\bibitem{Hos1} 
	J. Hosle, \textit{On the comparison of measures of convex bodies via
		projections and sections},  Int. Math. Res. Not. (IMRN) \textbf{2021} (2021), 
	13046-13074.
	
	
	
	
	\bibitem{HX}
	 J. Hu and G. Xiong, \textit{The logarithmic John ellipsoid},  Geom. Dedicata \textbf{197} (2018), 33-48.
	
	\bibitem{HXZ}
	 J. Hu, G. Xiong, and D. Zou, \textit{On mixed $L_p$ John ellipsoids}, Adv.
	Geom. \textbf{19} (2019),  297-312.
	
	\bibitem{HLYZ}  
	Y. Huang, E. Lutwak, D. Yang, and G. Zhang, \textit{
		Geometric measures in the dual Brunn-Minkowski theory and their
		associated Minkowski problems}, Acta Math.  \textbf{216} (2016),
	325-388.
	
	\bibitem{YXZ} 
	 Y. Huang, D. Xi, and Y. Zhao, \textit{The Minkowski problem in Gaussian
		probability space}, Adv. Math.  \textbf{385} (2021), Paper No.
	107769, 36 pp.
	
	\bibitem{J}
	F. John, \textit{Extremum problems with inequalities as subsidiary
		conditions}, in: Studies and Essays Presented to R. Courant on His
	60th Birthday, Interscience Publishers, Inc., New York, 1948, 187-204.

	\bibitem{KL} 
L. Kryvonos  and D. Langharst, \textit{Weighted Minkowski’s existence theorem and
projection bodies}, Trans. Amer.
Math. Soc. \textbf{376} (2023), 8447-8493.

	

	\bibitem{LRZ} 
	D. Langharst, M. Roysdon, and  A. Zvavitch,  \textit{General measure extensions of projection
		bodies}, Proc. London Math. Soc.  \textbf{125} (2022), 1083-1129..
	
	\bibitem{Lew} 
	D.R. Lewis, \textit{Ellipsoids defined by Banach ideal norms}, Mathematika \textbf{26} (1979), 18-29.
	
	\bibitem{LHX}
	A.-J. Li, Q. Huang, and D. Xi, \textit{New sine ellipsoids and related volume inequalities}, Adv. Math. \textbf{353} (2019), 281-311.
	
	
	\bibitem{LWZ}
	X. Li, H. Wang, and J. Zhou, \textit{On $(p,q)$-John
		ellipsoids}(Chinese), Sci. Sin. Math. \textbf{50} (2020), 1-23.
	
	\bibitem{Lj} 
	J. Liu, \textit{The $L_p$-Gaussian Minkowski problem}. Calc. Var. Partial Differential Equations \textbf{61} (2022), Paper No. 28, 23 pp.
	
	
	\bibitem{Li} 
	G. Livshyts, \textit{An extension of Minkowski's theorem and its applications to questions about projections for measures}, Adv.
	Math. \textbf{356} (2019), Paper No. 106803, 40 pp.
	
	\bibitem{LM}
	 G. Livshyts, A. Marsiglietti, P. Nayar, and A. Zvavitch,
	\textit{On the Brunn-Minkowski inequality for general measures with
		applications to new isoperimetric-type inequalities}, Trans. Amer.
	Math. Soc. \textbf{369} (2017), 8725-8742.
	
	\bibitem{LuX}
	X. Lu and G. Xiong, 
	\textit{The $L_p$ John ellipsoids for negative indices}, Israel J. Math. \textbf{255} (2023), 155-176.
	
	
	\bibitem{L75}
	E. Lutwak, \textit{Dual mixed volumes}, Pac. J. Math. \textbf{58} (1975), 531-538. 
	
	\bibitem{L93}
	E. Lutwak, \textit{The Brunn-Minkowski-Firey theory I: Mixed volumes
		and the Minkowski Problem},  J. Differential Geom. \textbf{38}
	(1993), 131-150.
	
	\bibitem{L96}
	E. Lutwak, \textit{The Brunn-Minkowski-Firey theory II: Affine and Geominimal Surface Areas},  Adv. Math. \textbf{118}
	(1996), 244-294.
	
	
	\bibitem{LYZel}
	E. Lutwak, D. Yang, and G. Zhang, \textit{A new ellipsoid associated
		with convex bodies}, Duke Math. J. \textbf{104} (2000), 375-390.
	
	\bibitem{LYZCR}
	E. Lutwak, D. Yang, and G. Zhang, \textit{The Cramer-Rao inequality
		for star bodies}, Duke Math. J. \textbf{112} (2002), 59-81.
	
	
	\bibitem{LYZJ}
	E. Lutwak, D. Yang, and G. Zhang,\textit{ $L_p$ John ellipsoids},
	Proc. London Math. Soc. \textbf{90} (2005), 497-520.
	
	\bibitem{LYZ10}
	E. Lutwak, D. Yang, and G. Zhang, \textit{Orlicz centroid bodies}, J. Differential Geom.  \textbf{84} (2010), 365-387. 
	
	
	\bibitem{LYZ18}
	E. Lutwak, D. Yang, and G. Zhang,  \textit{$L_p$ dual curvature
		measures}, Adv. Math. \textbf{329} (2018), 85-132.

\bibitem{LZ97}
E. Lutwak and G. Zhang,  \textit{Blaschke-Santal\'{o} inequalities},
J. Differential Geom. \textbf{47} (1997), 1-16.

\bibitem{Lv}
   S. Lv,	\textit{A sharp dual $L_p$ John ellipsoid problem for $p \leq -n-1$}, Beitr. Algebra Geom. \textbf{60} (2019), 709-732.
	
	\bibitem{MWF}
	T. Ma, D. Wu, and Y. Feng, \textit{$(p, q)$-John Ellipsoids}, J.
	Geom. Anal. \textbf{31} (2021), 9597-9632.
	
	
	\bibitem{MP}
	V.D. Milman and A. Pajor, \textit{Isotropic position and inertia
		ellipsoids and zonoids of the unit ball of a normed n-dimensional
		space}, Geom. Aspects of Funct. Analysis (Lindenstrauss-Milman
	eds.), Lecture Notes in Math. \textbf{1376} (1989), 64-104.
	
	\bibitem{Mil} 
	E. Milman and L. Rotem, \textit{Complemented Brunn-Minkowski inequalities and isoperimetry for homogeneous and non-homogeneous measures}, Adv.
	Math. \textbf{262} (2014), 867-908.
	

	\bibitem{PettyS} 
	C.M. Petty, \textit{Surface area of a convex body under affine
		transformations}, Proc. Amer. Math. Soc. \textbf{12} (1961),
	824-828.
	
	\bibitem{RX} M. Roysdon and S. Xing, \textit{On $L_p$-Brunn-Minkowski type and
		$L_p$-isoperimetric type inequalities for measures}, Trans. Amer.
	Math. Soc. \textbf{374} (2021), 5003-5036.
	
	\bibitem{Schneider}
	R. Schneider, Convex bodies: the Brunn-Minkowski theory,
	Encyclopedia of Mathematics and its Applications, \textbf{151},
	Cambridge University Press, Cambridge, 2014.
	
	
	\bibitem{Wu}
	 D. Wu, \textit{A generalization of $L_p$-Brunn-Minkowski inequalities and
		$L_p$-Minkowski problems for measures}, Adv.  Appl. Math.
	\textbf{89} (2017), 156-183.
	
	\bibitem{Wu2}
	 D. Wu, \textit{Firey-Shephard problems for homogeneous measures}, J. Math. Anal. Appl. \textbf{458} (2018), 43-57.

	\bibitem{Wu3}
	D. Wu and  Z.-H. Bu, \textit{The measure-comparison problem for polar $(p,\mu)$-centroid bodies},  Adv.  Appl. Math.
	\textbf{137} (2022), Paper No.
	102332, 31 pp.
	
	\bibitem{YLW} 
	W. Yu, G. Leng, and D. Wu, \textit{Dual $L_p$ John
		ellipsoids}, Proc. Edinb. Math. Soc. \textbf{50} (2007), 737-753.
	
	
	\bibitem{Zhao}
	Y. Zhao, \textit{Existence of solutions to the even dual Minkowski problem}, J. Differential Geom. \textbf{110} (2018), 543-572.
	
	\bibitem{ZX} 
	D. Zou and G. Xiong, \textit{Orlicz-John ellipsoids}, Adv. Math. \textbf{265} (2014),
	132-168.
	
	\bibitem{ZX2}
	 D. Zou and G. Xiong, \textit{Orlicz-Legendre ellipsoids}, J. Geom. Anal.
	\textbf{26} (2016), 2474-2502.
	
	\bibitem{Zv}  A. Zvavitch,  \textit{The Busemann-Petty problem for arbitrary
		measures}, Math. Ann. \textbf{331} (2005), 867-887.
	
	
	
	
	
	
\end{thebibliography}
\end{document}